\theoremstyle{plain}
\newtheorem*{Def*}{Definition}
\newtheorem{Le}{Lemma}
\newtheorem{Prop}{Proposition}
\newtheorem{PropA}{Proposition}
\newtheorem*{Prop*}{Proposition}
\newtheorem*{Cor*}{Corollary}
\newtheorem*{Rem*}{Remark}
\newtheorem{Rem}{Remark}
\newtheorem{Thm}{Theorem}
\newtheorem*{Thm*}{Theorem}
\newtheorem*{OProb*}{Open Problem}
\newcommand{\Ce}{C_\epsilon}
\newcommand{\woa}{w_{0,1}}
\newcommand{\wob}{w_{0,2}}
\newcommand{\R}{\mathbb R}
\newcommand{\C}{\mathbb C}
\newcommand{\eps}{\epsilon}
\DeclareMathOperator{\diam}{diam}
\DeclareMathOperator{\dist}{dist}
\title{A geometric property of quadrilaterals}
\author[Efstathios-Konstantinos Chrontsios-Garitsis and Aimo Hinkkanen]{Efstathios-Konstantinos Chrontsios-Garitsis$^{1},\, $Aimo Hinkkanen$^{2}$}
\address{$^{1}$ Department of Mathematics, University of Illinois Urbana--Champaign, U.S.A., ekc3@illinois.edu, echronts@gmail.com}
\address{$^{1}$ Current address: Department of Mathematics, University of Tennessee, Knoxville, U.S.A., echronts@utk.edu}
\address{$^{2}$ Department of Mathematics, University of Illinois Urbana--Champaign, U.S.A., aimo@illinois.edu}
\date{}
\begin{document}

\maketitle

\begin{abstract}
Quadrilaterals in the complex plane play a significant part in the theory of planar quasiconformal mappings. Motivated by the geometric definition of quasiconformality, we prove that every quadrilateral  with modulus in an interval $[1/K, K]$, where $K>1$, contains a disk lying in its interior, of radius depending only on the internal distances between the pairs of opposite sides of the quadrilateral and on $K$.
\end{abstract}

\setlength{\abovedisplayskip}{12pt}
\setlength{\belowdisplayskip}{12pt}

\section{Introduction}\label{section:introduction}
A \textbf{quadrilateral} $Q=Q(v_1, v_2, v_3, v_4)$ is a bounded Jordan domain in the complex plane $\C$ with four distinct points, called \textbf{vertices}, selected on the boundary and labeled in counter-clockwise order as $v_1, v_2, v_3, v_4$. Recall that a Jordan domain is an open and connected set whose boundary is a homeomorphic image of the circle $\mathbb{S}^1\subset \C$. Quadrilaterals are essential for the geometric definition and properties of planar quasiconformal mappings. More specifically, given a quadrilateral $Q(v_1,v_2,v_3,v_4)$, it can be mapped under a conformal map $\phi$ onto the rectangle $Rec(Q)\subset \C$ with vertices $(0, M(Q), i+M(Q), i) = (\phi(v_1), \phi(v_2), \phi(v_3), \phi(v_4))$, for some $M(Q)>0$. This uniquely defined number $M(Q)$ is called the \textbf{modulus} of $Q$. A homeomorphism $f:\Omega \rightarrow \Omega'$ between domains in $\C$ is $K$-\textbf{quasiconformal} if there is $K\geq 1$ such that
$$M(f(Q))\leq K M(Q)$$ for all quadrilaterals $Q$ whose closure lies in $\Omega$. 

As a result, there is interest in determining properties that quadrilaterals of uniformly bounded modulus might satisfy. Before stating our main result, we need the notion of internal distances. Given a quadrilateral $Q=Q(v_1, v_2, v_3, v_4)$ we define its $a$-sides to be the two disjoint arcs on its boundary from $v_1$ to $v_2$ and from $v_3$ to  $v_4$ that do not contain any vertices in their interior, and its $b$-sides similarly the arcs from $v_2$ to $v_3$ and from  $v_4$ to $v_1$.  The \textbf{internal distance between the $\boldsymbol{a}$-sides} of $Q$ is defined as
$$
s_a(Q):= \inf \{ \ell(C): C \subset Q \, \text{a Jordan arc with end points on different} \, a\text{-sides} \},
$$ where $\ell(C)$ is the length of $C$. Similarly, we define the \textbf{internal distance between the $\boldsymbol{b}$-sides} of $Q$ as
$$
s_b(Q):= \inf \{ \ell(C): C \subset Q \, \text{a Jordan arc with end points on different} \,b\text{-sides} \}.
$$ 

Our main result is the following geometric property for quadrilaterals.

\begin{Thm}\label{thm_main_Quad}
	For every $K\geq 1$ there is a constant $\delta\in (0,1)$ depending only on $K$ such that  every quadrilateral $Q$ with $M(Q)\in [1/K, K]$ contains a disk of radius $\delta \max \{ s_a(Q), s_b(Q) \} $.
\end{Thm}

\paragraph{\textbf{Acknowledgements.}} This material is based upon work supported by the National Science Foundation under Grant No.~1600650. The authors also wish to thank C. Bishop, P. Koskela and the anonymous referee for their valuable comments.

\section{Notation}
	Let $C$ be a closed Jordan arc, i.e., a homeomorphic image of the interval $[0,1]\subset \R$. We denote the length of $C$ by $\ell (C)\in [0,\infty]$. Moreover, if $z, w \in C$ then we denote by $C(z,w)$ the closed sub-arc of $C$ with end points $z$ and $w$. The same notation for sub-arcs will be used for open Jordan arcs and Jordan curves. The notation $C$ will not be used for constants to avoid confusion.
    
    For $z$, $w\in \C$ we denote by $[z,w]$ and $(z,w)$ the closed and open line segment connecting $z$ to $w$, respectively.
    
	For a given quadrilateral $Q$, we denote by $\partial_aQ$ and $\partial_bQ$ the union of the $a$-sides and the union of the $b$-sides of $Q$ respectively, and set $\partial_{a_1}Q:=\partial Q(v_1,v_2)$, $\partial_{b_1}Q:=\partial Q(v_2,v_3)$, $\partial_{a_2}Q:=\partial Q(v_3,v_4)$, $\partial_{b_2}Q:=\partial Q(v_4,v_1)$. Note that since $\partial Q$ is a Jordan curve, the notation $\partial Q(v_k,v_l)$  denotes the closed sub-arc of $\partial Q$ with end points $v_k$, $v_l$ that does not intersect other vertices.
	
	For $z\in {\mathbb C}$ and $r>0$, we write $D(z,r)= \{ w\in {\mathbb C} \colon |w-z|<r \}$ and $ \overline{D(z,r)}= \{ w\in {\mathbb C} \colon |w-z| \leq r \}$. If $E\subset {\mathbb C}$, we denote the Euclidean diameter of $E$ by $\diam E$. If $E$ and $F$ are non-empty subsets of ${\mathbb C}$, we denote the Euclidean distance between $E$ and $F$ by $\dist (E,F)$.

\section{Preliminary reductions}

\subsection{Avoiding the vertices of $Q$}

Let $Q$ be a quadrilateral. For $\delta>0$ with 
	\begin{equation}\label{original_delta_bounds_diam}
		10 \delta<\min\{ \diam (\partial_{a_1}Q), \diam (\partial_{a_2}Q), \diam (\partial_{b_1}Q), \diam (\partial_{b_2}Q)\}
	\end{equation}
	and
	\begin{equation}\label{original_delta_bounds_dist}
	10 \delta<\min\{ \dist (\partial_{a_1}Q, \partial_{a_2}Q),  \dist (\partial_{b_1}Q, \partial_{b_2}Q)\}  ,
	\end{equation}
	define 
	\begin{equation*}
		\begin{split}
			s_a^\delta(Q):=\inf \{ &\ell(C): C \subset Q \, \text{is a Jordan arc with end points on}\\ &\partial_{a_1}Q\setminus (D(v_1,\delta)\cup D(v_2,\delta)) \, \text{and} \, \partial_{a_2}Q\setminus (D(v_3,\delta)\cup D(v_4,\delta)) \}
		\end{split}
	\end{equation*}
	and
	\begin{equation*}
	\begin{split}
	s_b^\delta(Q):=\inf \{ &\ell(C): C \subset Q \, \text{is a Jordan arc with end points on}\\ &\partial_{b_1}Q\setminus (D(v_2,\delta)\cup D(v_3,\delta)) \, \text{and} \, \partial_{b_2}Q\setminus (D(v_1,\delta)\cup D(v_4,\delta)) \}.
	\end{split}
	\end{equation*}
	Note that the Jordan arcs considered in the definitions of $s_a^\delta(Q)$ and $s_b^\delta(Q)$ may contain points that are very close to the vertices of $Q$, for instance there might be $z\in C$ with $|z-v_1|<\delta$, as long as $z$ is not an end point of $C$.	
\begin{Le}\label{Le_s_a_delta}
	For all quadrilaterals $Q$ and $\delta>0$ satisfying \eqref{original_delta_bounds_diam} and \eqref{original_delta_bounds_dist} we have
	\begin{equation}\label{eq:sa^delta compared}
	s_a(Q)\leq s_a^\delta(Q)\leq s_a(Q)+4\pi \delta
	\end{equation}
	and
	\begin{equation}\label{eq:sb^delta compared}
	s_b(Q)\leq s_b^\delta(Q)\leq s_b(Q)+4\pi \delta  .
	\end{equation}
\end{Le}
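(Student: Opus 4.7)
The first inequality in \eqref{eq:sa^delta compared} (and likewise in \eqref{eq:sb^delta compared}) is immediate, since the infimum defining $s_a^\delta(Q)$ is taken over a sub-class of the arcs admissible for $s_a(Q)$. For the reverse bound the plan is to start with an arc $C$ nearly realising $s_a(Q)$ and perform a localised surgery near each endpoint of $C$ that happens to lie inside a vertex disk, producing an arc admissible for $s_a^\delta(Q)$ whose length exceeds $\ell(C)$ by at most $4\pi\delta$. Passing to infima and letting the approximation error tend to $0$ will then yield the upper bound in \eqref{eq:sa^delta compared}, with \eqref{eq:sb^delta compared} following by the symmetric argument applied to the $b$-sides.

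Fix $\eps>0$ and a Jordan arc $C\subset Q$ from $z_1\in\partial_{a_1}Q$ to $z_2\in\partial_{a_2}Q$ with $\ell(C)\le s_a(Q)+\eps$. Applying \eqref{original_delta_bounds_dist} with $v_1\in\partial_{b_2}Q$ and $v_2\in\partial_{b_1}Q$ forces $|v_1-v_2|>10\delta$, and it likewise gives $\dist(v_1,\partial_{a_2}Q)>10\delta$ together with the analogous facts at the other vertices; thus the only endpoint positions that can violate admissibility for $s_a^\delta$ are $z_1\in D(v_1,\delta)\cup D(v_2,\delta)$ and $z_2\in D(v_3,\delta)\cup D(v_4,\delta)$, and the two ends can be modified independently. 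I describe the surgery at $z_1$ under the assumption $z_1\in D(v_1,\delta)$; the remaining cases are symmetric.

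Set $t_0:=\sup\{t\in[0,1]:C(t)\in\overline{D(v_1,\delta)}\}$. Since $z_2\notin\overline{D(v_1,\delta)}$ one has $t_0<1$, so $w_1:=C(t_0)\in Q\cap\partial D(v_1,\delta)$ with $C(w_1,z_2)\subset Q\setminus D(v_1,\delta)$. Let $A$ be the connected component of $\partial D(v_1,\delta)\cap\overline Q$ containing $w_1$: it is a closed subarc of a circle of circumference $2\pi\delta$, with endpoints on $(\partial_{a_1}Q\cup\partial_{b_2}Q)\cap\partial D(v_1,\delta)$ by \eqref{original_delta_bounds_dist}. Granting the topological claim that one endpoint of $A$ lies on $\partial_{a_1}Q$ and the other on $\partial_{b_2}Q$, I take $p$ to be the $\partial_{a_1}Q$-endpoint of $A$; then $|p-v_1|=\delta$ and $|p-v_2|\ge|v_1-v_2|-\delta>9\delta$, so $p\in\partial_{a_1}Q\setminus(D(v_1,\delta)\cup D(v_2,\delta))$ is admissible for $s_a^\delta$. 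Letting $\sigma\subset A$ be the subarc from $p$ to $w_1$, the concatenation $\tilde C_1:=\sigma\cup C(w_1,z_2)$ is a Jordan arc in $\overline Q$ from $p$ to $z_2$---the two pieces meet only at $w_1$, since $\sigma\subset\overline{D(v_1,\delta)}$ while $C(w_1,z_2)$ avoids $D(v_1,\delta)$---of length at most $\ell(C)+2\pi\delta$. Performing the analogous surgery at $z_2$ if needed produces $\tilde C$ with $\ell(\tilde C)\le\ell(C)+4\pi\delta$, as required.

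The step I anticipate as the main obstacle is the topological claim invoked above. Because $\partial Q$ is only assumed to be a Jordan curve it may intersect $\partial D(v_1,\delta)$ in an intricate way, and one must exclude configurations in which $A$ has both endpoints on the same side, say both on $\partial_{b_2}Q$. I plan to settle this by applying the Jordan curve theorem to the closed curve formed by $A$ together with the $\partial Q$-arc passing through $v_1$ between the two endpoints of $A$, using the location of $v_1$ in the open disk $D(v_1,\delta)$ to force a contradiction in the offending configuration. Should that argument prove more delicate than expected, a fallback strategy is to allow the connecting arc $\sigma$ to traverse $A$ entirely and then continue within $\overline Q\cap\overline{D(v_1,\delta)}$; the extra length per endpoint is still at most $2\pi\delta$ because $A$ lies on a circle of circumference $2\pi\delta$, so the final constant $4\pi\delta$ is preserved.
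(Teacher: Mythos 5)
The easy half and the overall surgery strategy (reroute a near-minimal arc along $\partial D(v_1,\delta)$ at a cost of at most $2\pi\delta$ per offending endpoint) match the paper, but your argument has a genuine gap exactly where you flag it: the ``topological claim'' that the component $A$ of $\partial D(v_1,\delta)\cap\overline Q$ through your last-exit point $w_1$ has one endpoint on $\partial_{a_1}Q$ and the other on $\partial_{b_2}Q$. As stated the dichotomy is not correct (both endpoints can lie on the same side, e.g.\ when $\partial_{a_1}Q$ re-enters the disk), and the case you actually must exclude --- both endpoints on $\partial_{b_2}Q$ --- cannot be ruled out by your sketched plan of applying the Jordan curve theorem to $A$ together with the $\partial Q$-arc through $v_1$ and ``the location of $v_1$'': nothing about the position of $v_1$ alone forbids $\partial_{b_2}Q$ from flanking a circle component on both sides (e.g.\ two hairpins of $\partial_{b_2}Q$ dipping into $D(v_1,\delta)$ produce exactly such components). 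What saves the day is connectivity information carried by the arc $C$ itself --- either that $C(w_1,z_2)$ must reach $\partial_{a_2}Q$ without re-entering $\overline{D(v_1,\delta)}$, or, as the paper does, that the relevant component has the sub-arc of $C$ from $w$ to $z_1\in\partial_{a_1}Q$ in its closure --- and your proposal never invokes this. The paper's proof is precisely this missing work: it applies Ker\'ekj\'art\'o's theorem twice, isolates a component $\Omega_1$ whose boundary contains a crosscut $\gamma''\subset\partial D(v_1,\delta)$, and derives a contradiction from ``both endpoints of $\gamma''$ on $\partial_{b_2}Q$'' because $\Omega_1$ contains the piece of the arc ending at $z_1\in\partial_{a_1}Q$. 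Your fallback does not repair this: if both endpoints of $A$ lie on $\partial_{b_2}Q$, traversing all of $A$ lands you on an inadmissible side, and ``continuing within $\overline Q\cap\overline{D(v_1,\delta)}$'' gives neither a guarantee of reaching $\partial_{a_1}Q\setminus(D(v_1,\delta)\cup D(v_2,\delta))$ (a point of $\partial_{a_1}Q$ strictly inside $D(v_1,\delta)$ is not admissible) nor any length bound once you leave the circle.

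Two smaller points in the same step also need care. First, $A$ is a component of $\partial D(v_1,\delta)\cap\overline Q$, so its relative interior may contain points of $\partial Q$ (tangential contacts); your concatenation $\sigma\cup C(w_1,z_2)$ then need not be an arc lying in $Q$ as required in the definition of $s_a^\delta(Q)$. The paper avoids this by producing $\gamma''$ as a crosscut of $Q$ (an arc of the circle on $\partial\Omega_1$ whose interior lies in $Q$). Second, your admissibility check for the new endpoint $p$ is fine ($|p-v_1|=\delta$ and $|p-v_2|\ge|v_1-v_2|-\delta>9\delta$, using \eqref{original_delta_bounds_dist} applied to $v_1\in\partial_{b_2}Q$, $v_2\in\partial_{b_1}Q$), so once the topological claim is established in the correct form --- some point of $\partial_{a_1}Q$ is an endpoint of the circle arc you follow, with that arc a crosscut of $Q$ --- the length bookkeeping and the reduction to the two endpoint surgeries go through as in the paper.
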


\begin{proof}
	Let $Q=Q(v_1,v_2,v_3,v_4)$ be a quadrilateral and suppose that $\delta>0$ satisfies \eqref{original_delta_bounds_diam} and \eqref{original_delta_bounds_dist}. Note that by the definition of $\delta$, the four disks $D(v_j,2\delta)$ for $1\leq j\leq 4$ have disjoint closures whose union does not completely contain any Jordan arc in $Q$ that joins the $a$-sides of $Q$, or that joins the $b$-sides of $Q$.
	
	The left hand sides of inequalities \eqref{eq:sa^delta compared} and \eqref{eq:sb^delta compared} follow from the definitions of the internal distances and those of $s_a^\delta (Q)$ and $s_b^\delta(Q)$.
	
	To prove the right hand inequality (\ref{eq:sa^delta compared}), suppose that $\varepsilon>0$ and that $\gamma$ is a rectifiable Jordan arc in $Q$ with end points $z_1\in \partial_{a_1}Q$ and $z_2\in \partial_{a_2}Q$, and with
$\ell(\gamma) < s_a(Q)  +\varepsilon$.  

If $z_1\notin D(v_1,\delta)\cup D(v_2,\delta)$ and $z_2\notin D(v_3,\delta)\cup D(v_4,\delta)$, then the arc $\gamma$ can be used in the definition of $s_a^\delta(Q)$, so that $s_a^\delta(Q) < s_a(Q)  +\varepsilon$. Otherwise, we modify $\gamma$ close to $z_1$ and/or $z_2$ to get another arc $\gamma'$ that can be used in the definition of $s_a^\delta(Q)$, such that $\ell(\gamma') \leq \ell(\gamma) + 4\pi \delta$, which implies that 
$s_a^\delta(Q) \leq \ell(\gamma')  < s_a(Q)  +\varepsilon + 4\pi \delta$. Since $\varepsilon>0$ is arbitrary, this implies the right hand inequality (\ref{eq:sa^delta compared}). The same method can be used to prove right hand inequality (\ref{eq:sb^delta compared}).

We explain how to modify $\gamma$ close to $z_1$, if necessary, to obtain an arc $\gamma_1$ such that $\ell(\gamma_1) \leq \ell(\gamma) + 2\pi \delta$ and such that the end point, say $z_3$, of $\gamma_1$ on $\partial_{a_1}Q$ is outside $D(v_1,\delta)\cup D(v_2,\delta)$. Performing a similar modification close to $z_2$, if necessary, gives rise to an arc $\gamma'$ with the required properties. 

We will need to modify $\gamma$ close to $z_1$ if  $z_1\in D(v_1,\delta)\cup D(v_2,\delta)$. Since these two disks have disjoint closures, suppose that $z_1\in D(v_1,\delta)$. The argument is similar if $z_1\in D(v_2,\delta)$.

Pick a point $z_0\in \gamma$ such that $z_0 \notin \cup_{j=1}^4 \overline{D(v_j,2\delta)}$.
This is possible as we have observed earlier. 

Choose $\rho>0$ such that $|z_1-v_1|< \rho < \delta$. When following $\gamma$ starting from $z_1$, let $w$ be the first point on $\gamma$ such that $|v_1-w|=\rho$. Then $w\in Q$. Further, the open arc of $\gamma$ from $z_1$ to $w$ lies in $Q$ and also in $D(v_1,\delta)$.

	We will use the following theorem due to Ker\'{e}kj\'{a}rt\'{o} (\cite{N}, p.~172). Let $J_1$ and $J_2$ be Jordan curves in the Riemann sphere $ \overline{ {\mathbb C} } =  {\mathbb C} \cup \{\infty\}$ such that $J_1\cap J_2$ contains at least two points. Then every connected component of $ \overline{ {\mathbb C} } \setminus ( J_1\cup J_2)$ is a Jordan domain. We will apply this theorem twice. For the first application, we take $J_1=\partial Q$ and $J_2=\partial D(v_1,\delta)$. Since each of $\partial_{a_1}Q$ and $\partial_{b_2}Q$  must intersect $J_2$ and not at $v_1$,  the intersection $J_1\cap J_2$ contains at least two points.  Note that both $z_0$ and $w$ lie in $ \overline{ {\mathbb C} } \setminus ( J_1\cup J_2)$. Let $\Omega_0$ be the component of $ \overline{ {\mathbb C} } \setminus ( J_1\cup J_2)$ that contains $z_0$. Then $\Omega_0$ is a Jordan domain and $\Omega_0\subset Q$. Further, $\partial \Omega_0 \subset J_1\cup J_2$. 
	
We next apply Ker\'{e}kj\'{a}rt\'{o}'s theorem to the Jordan curves $J_1=\partial Q$ and $J_3=\partial \Omega_0$. The part of $\gamma$ traced from $z_0$ to $z_1$ has a first point $z'$ that intersects $J_2$, and $z'$ lies on an arc of $J_2$ whose end points lie on $J_1$. These two end points cannot coincide since $J_1\cap J_2$ contains at least two points. This arc of $J_2$ is a subset of $J_3$, so $J_1\cap J_3$ contains at least two points.  We find that every connected component of $ \overline{ {\mathbb C} } \setminus ( J_1\cup J_3)$ is a Jordan domain. Two such components are $\Omega_0$ and $\overline{ {\mathbb C} } \setminus \overline{Q}$. 
All other components are also components of $Q\setminus \overline{ \Omega_0 }$. Let $\Omega_1$ be the component of $ \overline{ {\mathbb C} } \setminus ( J_1\cup J_3)$  containing $w$. We have $\Omega_1 \not= \Omega_0$ since one cannot connect $z_0$ to $w$ without intersecting $J_2$. The open arc of $\gamma$ from $w$ to $z_1$ lies in $\Omega_1$ since it does not intersect either $J_1$ or $J_2$ (note that $\partial \Omega_1 \subset J_1\cup J_2$). 

Recall that $\partial \Omega_0 \subset J_1\cup J_2$. The set $\partial \Omega_0 \setminus \partial Q = \partial \Omega_0 \setminus J_1$ is an open subset of the Jordan curve $\partial \Omega_0$ and hence consists of at most countably many Jordan arcs, each of which is an arc of $J_2=\partial D(v_1,\delta)$.  Now $\partial \Omega_1$ must contain such an arc of $J_2$, say $\gamma''$. Then the end points, say $w_1$ and $w_2$, of $\gamma''$ lie on $\partial Q$, hence $\gamma''$ is a cross cut of $Q$ and divides $Q$ into exactly two components, say $Q_1$ and $Q_2$, each of which is a Jordan domain. Now $w_1$ and $w_2$ divide $\partial Q$ into two Jordan arcs, and if the corresponding closed Jordan arcs are denoted by $J_4$ and $J_5$, then, with proper labeling, $\partial Q_1 = \gamma'' \cup J_4$ and $\partial Q_2 = \gamma'' \cup J_5$. One of $Q_1$ and $Q_2$, say $Q_1$, contains $\Omega_0$, and the other one coincides with $\Omega_1$, so $Q_2=\Omega_1$. Note that $w_j\not= v_1$ for $j=1,2$ since $|w_j-v_1|=\delta>0$.

By the definition of $\delta$, each of the points $w_1$ and $w_2$ (since they lie on $J_2=\partial D(v_1,\delta)$ and on $\partial Q$) can only belong to  $\partial_{a_1}Q$ or $\partial_{b_2}Q$. To get a contradiction, suppose that they both belong to 
$\partial_{b_2}Q$ and hence to the interior of $\partial_{b_2}Q$ since they are different from $v_1$. Then $\partial \Omega_1 \setminus \gamma''$ either is contained in $\partial_{b_2}Q$, or contains $\partial Q\setminus \partial_{b_2}Q$ and in particular contains $z_2$. Since $w\in \Omega_1$, we have $\Omega_1 \subset D(v_1,\delta)$, hence $z_2\notin \partial \Omega_1$. It follows that $\partial \Omega_1 \cap \partial Q\subset \partial_{b_2}Q$. But then $\Omega_1$ cannot contain the arc of $\gamma$ from $w$ to $z_1\in \partial_{a_1}Q$, a contradiction. 

It follows that at least one of $w_1$ and $w_2$, say $w_1$, lies on $\partial_{a_1}Q$.
When we trace $\gamma$ from $z_0$ towards $z_1$, we must enter $\Omega_1$ at some point, hence there will be a first point where we intersect $\partial \Omega_1$. This point, which is the same as the point $z'$ discussed above, is also in $Q$, hence on $\gamma''$. We form the arc $\gamma_1$ by following $\gamma$ from $z_2$ through $z_0$ to $z'$ and then along the arc $\gamma''$ from $z'$ to $w_1$. Then $\gamma_1$ joins the $a-$sides of $Q$ in $Q$ and $\ell(\gamma_1)\leq \ell(\gamma)+2\pi \delta$, as required. 

This completes the proof of Lemma~\ref{Le_s_a_delta}. 
	
\end{proof}

\subsection{A consequence of Rengel's inequality}

Lehto and Virtanen (\cite{LV1}, see also \cite{LV}, Lemma 4.1) obtained  the following consequence of Rengel's inequality (\cite{LV}, p.~22).

\begin{PropA} \label{Quad_Prop:A}
	The modulus of a quadrilateral $Q$ satisfies the inequality
	$$
	\frac{  ( \log(1+2s_b(Q)/s_a(Q)) )^2}{\pi+2\pi \log(1+2s_b(Q)/s_a(Q))} \leq M(Q)\leq  \frac{\pi+2\pi \log(1+2s_a(Q)/s_b(Q))}{  (  \log(1+2s_a(Q)/s_b(Q)) )^2}.
	$$
\end{PropA}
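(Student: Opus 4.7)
\textit{Reduction by duality.} The conjugate quadrilateral $Q^\ast = Q(v_2, v_3, v_4, v_1)$ has the same underlying Jordan domain as $Q$ but with vertices cyclically relabeled, so its $a$-sides are the $b$-sides of $Q$ and vice versa. Consequently $s_a(Q^\ast) = s_b(Q)$, $s_b(Q^\ast) = s_a(Q)$, and $M(Q^\ast) = 1/M(Q)$ (the conformal rectangle for $Q^\ast$ is obtained from the one for $Q$ by a quarter turn and a rescaling). Hence the right-hand inequality applied to $Q^\ast$ and reciprocated is exactly the left-hand inequality for $Q$, so it suffices to prove only the upper bound on $M(Q)$. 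Abbreviate $s_a = s_a(Q)$, $s_b = s_b(Q)$, and $L = \log(1 + 2s_a/s_b)$.

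\textit{Test metric and admissibility.} Write $M(Q) = \inf \iint_Q \rho^2\,dA$, the infimum taken over Borel $\rho \colon Q \to [0,\infty]$ satisfying $\int_\gamma \rho\,ds \geq 1$ for every rectifiable arc $\gamma$ in $Q$ joining the two $a$-sides. Given $\epsilon > 0$, pick a rectifiable cross-cut $\sigma \subset Q$ from $\partial_{b_1}Q$ to $\partial_{b_2}Q$ with $\ell(\sigma) < s_b + \epsilon$, which exists by definition of $s_b$. Let $v(z) = \dist(z, \sigma)$ and consider
\[
\rho(z) = \frac{1}{L\bigl(v(z) + s_b/2\bigr)}\, \chi_{\{v(z)\leq s_a\}}.
\]
Since $\sigma$ separates the two $a$-sides inside $\overline Q$, every such $\gamma$ must cross $\sigma$. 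Parametrize $\gamma$ by arclength so that $\gamma(0) \in \sigma$, and let $\alpha, \beta \geq 0$ be the arclengths of the two halves of $\gamma$ meeting at $\gamma(0)$; then $\alpha + \beta = \ell(\gamma) \geq s_a$. The $1$-Lipschitz property of $v$ and $v(\gamma(0))=0$ give $v(\gamma(s)) \leq |s|$, so a direct integration shows
\[
\int_\gamma \rho\,ds \;\geq\; \frac{1}{L}\log\!\Bigl[\bigl(1+\tfrac{2\min(\alpha,s_a)}{s_b}\bigr)\bigl(1+\tfrac{2\min(\beta,s_a)}{s_b}\bigr)\Bigr] \;\geq\; 1,
\]
where the last step uses the elementary inequality $(1+2x/s_b)(1+2y/s_b) \geq 1+2s_a/s_b$, valid whenever $x, y \geq 0$ and $\min(x,s_a)+\min(y,s_a)\geq s_a$ (a cheap case-check). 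Thus $\rho$ is admissible.

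\textit{Energy estimate and conclusion.} Since $v$ is a Euclidean distance function, $|\nabla v| = 1$ a.e.\ on $Q$, so by the coarea formula
\[
\iint_Q \rho^2\, dA \;=\; \frac{1}{L^2}\int_0^{s_a} \frac{\mathcal H^1(\{v=r\}\cap Q)}{(r + s_b/2)^2}\,dr.
\]
I would then apply the Minkowski-type tube bound $\mathcal H^1(\{v=r\}\cap Q) \leq 2\ell(\sigma) + 2\pi r \leq 2(s_b+\epsilon) + 2\pi r$, evaluate the two elementary integrals $\int_0^{s_a}(r+s_b/2)^{-2}\,dr$ and $\int_0^{s_a}r(r+s_b/2)^{-2}\,dr$ in closed form, and let $\epsilon \to 0$. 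A routine simplification shows that the resulting quantity is bounded above by $(\pi + 2\pi L)/L^2$, completing the proof of the upper bound for $M(Q)$.

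\textit{Main obstacle.} The most delicate ingredient is the tube-length estimate $\mathcal H^1(\{v=r\}\cap Q) \leq 2\ell(\sigma) + 2\pi r$ for a merely rectifiable cross-cut $\sigma$ whose endpoints lie on $\partial Q$: for smooth or polygonal $\sigma$ this is immediate, but in general one needs a polygonal approximation of $\sigma$ combined with lower semicontinuity of Hausdorff measure (or an appeal to classical Minkowski content for rectifiable sets) to justify passing the bound to the limit. The remaining steps---the admissibility check via the product inequality, the coarea identity, and the closed-form integration---are routine once the tube bound is in place.
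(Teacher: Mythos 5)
The paper does not prove Proposition A---it is quoted directly from Lehto--Virtanen (\cite{LV}, Lemma 4.1)---so there is no internal proof to compare against; I assess your derivation on its own terms. It is essentially sound. The duality $M(Q^\ast)=1/M(Q)$ with $s_a,s_b$ interchanged correctly reduces the claim to the upper bound; the admissibility verification is correct (the $1$-Lipschitz bound $v(\gamma(s))\leq|s|$, the two logarithmic integrals, and the expansion $(1+2x/s_b)(1+2y/s_b)\geq 1+2(x+y)/s_b\geq 1+2s_a/s_b$ all hold); the coarea step is legitimate since $|\nabla v|=1$ a.e.\ on $\C\setminus\sigma$; and carrying out the two elementary integrals with $\mathcal{H}^1(\{v=r\}\cap Q)\leq 2(s_b+\epsilon)+2\pi r$ and letting $\epsilon\to 0$ yields $2\pi L/L^2+(4-2\pi)s_a/\bigl(L^2(s_a+s_b/2)\bigr)$, which is strictly below $(\pi+2\pi L)/L^2$ since $4<2\pi$, so the target constant comes out with room to spare.

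The tube-length bound is, as you say, the one genuinely delicate ingredient. It is true for rectifiable $\sigma$, and the polygonal-approximation route works (for a.e.\ $r$ the quantity $\mathcal{H}^1(\{v=r\}\cap Q)$ agrees with the perimeter of $\{v<r\}\cap Q$, so lower semicontinuity of perimeter is the appropriate tool). But you can avoid the pointwise perimeter estimate entirely: integrate by parts in $r$ and use only the Steiner-type area bound $|N_r(\sigma)\cap Q|\leq 2r\ell(\sigma)+\pi r^2$, which for a polygonal $\sigma$ follows by writing $N_r(\sigma)$ as a union of stadiums and subtracting the disks $D(p_i,r)$ at interior vertices, and which passes to rectifiable $\sigma$ far more painlessly than the perimeter bound does; the resulting integrals remain closed-form and still land below the target. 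Your use of the distance function $v(z)=\dist(z,\sigma)$ to the whole cross-cut is, if I recall Lehto--Virtanen's argument correctly, a natural refinement of their length-area proof, which works with circular arcs around a single point of a near-minimal cross-cut and thereby sidesteps any tube estimate at the cost of a less adapted metric.
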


Due to Proposition \ref{Quad_Prop:A}, we can replace the condition on the modulus in Theorem \ref{thm_main_Quad} by the equivalent condition that the ratio of internal distances lies in a bounded interval. More specifically, if $\mathcal{Q}_M(K)$ is the collection of all quadrilaterals $Q$ with $M(Q)\in [1/K,K]$, then by Proposition A there exists $\tilde{L}>1$ depending only on $K$ such that the collection of all quadrilaterals $\tilde{Q}$ with ratio $s_a(\tilde{Q})/s_b(\tilde{Q}) \in [1/\tilde{L},\tilde{L}]$, denoted by $\mathcal{Q}(\tilde{L})$, contains $\mathcal{Q}_M(K)$. Similarly, for $\tilde{L}>1$ there is $\tilde{K}>1$ larger than $K$, for which $\mathcal{Q}(\tilde{L})\subset\mathcal{Q}_M(\tilde{K})$. We will thus prove the assertion of Theorem \ref{thm_main_Quad} for all $Q\in \mathcal{Q}(\tilde{L})$, for fixed $\tilde{L}>1$ and for $\delta\in (0,1)$ depending on $\tilde{L}$.

\begin{Rem}
	A sharper form of the inequality in Proposition \ref{Quad_Prop:A} was later proved by Hanson and Herron in \cite{HH}, which could be useful in obtaining a sharper constant $\delta$ in Theorem \ref{thm_main_Quad}.
\end{Rem}

\subsection{Approximation of quadrilaterals}

Lemma~\ref{le:quad_approx} below allows us to only consider quadrilaterals $Q\in \mathcal{Q}(L)$ that have boundary consisting of finitely many line segments, each line segment being parallel to one of the coordinate axes, where $L=3\tilde{L}$. Denote the collection of all such quadrilaterals by $\mathcal{Q}_{\text{ls}}(L)$.

\begin{Le}\label{le:quad_approx}
	For every $Q\in \mathcal{Q}(\tilde{L})$ and every $\tau\in (0,1/2]$ there is a quadrilateral $Q_\tau\in \mathcal{Q}_{\text{ls}}(L_\tau)$ contained in $Q$ with $|s_a(Q_\tau)-s_a(Q)|\leq\tau \min\{s_a(Q),s_b(Q)\}$ and $|s_b(Q_\tau)-s_b(Q)|\leq\tau\min\{s_a(Q),s_b(Q)\}$, where $L_\tau= \frac{1+\tau}{1-\tau}\tilde{L}\leq L$.
\end{Le}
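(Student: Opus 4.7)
Fix $Q = Q(v_1, v_2, v_3, v_4) \in \mathcal{Q}(\tilde{L})$ and $\tau \in (0, 1/2]$, and set $d := \min(s_a(Q), s_b(Q))$. The plan is to construct $Q_\tau$ as a fine polygonal axis-parallel approximation of $Q$ from the inside. Choose a small mesh size $h>0$ (to be specified, depending on $\tau$, $d$, and the geometry of $Q$), and consider the tiling of $\C$ by closed axis-parallel squares of side $h$. Let $R_h$ be the union of all closed $h$-squares contained in $Q$. For $h$ small enough, one can extract from $R_h$ a simply connected region $\tilde{Q}\subset Q$ whose boundary is a Jordan curve composed of axis-parallel segments, with Hausdorff distance from $\overline{Q}$ of size $O(h)$; a priori $R_h$ may have holes or singular crossings on its boundary, but these can be handled by filling in holes and, if necessary, shifting the grid generically. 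On $\partial \tilde{Q}$, place four points $\tilde{v}_1,\tilde{v}_2,\tilde{v}_3,\tilde{v}_4$ in counter-clockwise order, each $\tilde{v}_j$ close to $v_j$ (e.g.\ a nearest point on $\partial \tilde{Q}$); this yields a polygonal quadrilateral $Q_\tau\subset Q$ with the required geometric form.

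\textbf{Internal distance estimates.} I would argue $|s_a(Q_\tau)-s_a(Q)|\leq \tau d$ by establishing the two inequalities separately (the argument for $s_b$ is symmetric). For the upper bound $s_a(Q_\tau)\leq s_a(Q)+\tau d$, take a rectifiable arc $\gamma\subset Q$ joining $\partial_{a_1}Q$ and $\partial_{a_2}Q$ with $\ell(\gamma)<s_a(Q)+\varepsilon$, then modify $\gamma$ near each of its endpoints to produce an arc $\gamma'\subset \tilde{Q}$ joining the $a$-sides of $Q_\tau$, with length increase $O(h)$. The modification is closely analogous to the one in the proof of Lemma~\ref{Le_s_a_delta}: apply Ker\'{e}kj\'{a}rt\'{o}'s theorem to the Jordan curves $\partial Q$ and $\partial \tilde{Q}$ to identify a short sub-arc of $\partial \tilde{Q}$ along which to re-route $\gamma$, landing on the appropriate $a$-side of $Q_\tau$. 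For the lower bound $s_a(Q_\tau)\geq s_a(Q)-\tau d$, any arc $\tilde{\gamma}\subset \tilde{Q}$ joining the $a$-sides of $Q_\tau$ already lies in $Q$; its endpoints can be extended by short arcs (of length $O(h)$) through the thin strip $\overline{Q}\setminus \tilde{Q}$ to reach the corresponding $a$-sides of $Q$. Choosing $h$ so that the $O(h)$ error terms are at most $\tau d$ gives both inequalities, and likewise for $s_b$.

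\textbf{Ratio bound and main obstacle.} With the distance bounds in hand, the ratio inequality follows from
\[
\frac{s_a(Q_\tau)}{s_b(Q_\tau)} \leq \frac{s_a(Q)+\tau d}{s_b(Q)-\tau d} \leq \frac{(1+\tau)\,s_a(Q)}{(1-\tau)\,s_b(Q)} \leq \frac{1+\tau}{1-\tau}\,\tilde{L} = L_\tau,
\]
where we have used $d\leq s_a(Q)$ and $d\leq s_b(Q)$, together with the symmetric bound on the reciprocal ratio, yielding $Q_\tau\in \mathcal{Q}_{\text{ls}}(L_\tau)$. The main technical obstacle is the upper bound on $s_a(Q_\tau)$: the re-routing of the near-optimal arc $\gamma$ must land its endpoints on the correct $a$-sides of $Q_\tau$ (rather than on an adjacent $b$-side), with only $O(h)$ length increase. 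This requires careful analysis of the ``pocket'' region between $\partial Q$ and $\partial \tilde{Q}$ near each vertex $v_j$, where different sub-arcs of $\partial Q$ meet, and is closely analogous to the Ker\'{e}kj\'{a}rt\'{o}-theorem argument used in the proof of Lemma~\ref{Le_s_a_delta}.
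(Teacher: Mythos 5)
You take essentially the same route as the paper (inner polygonal axis-parallel approximation plus arc re-routing near the endpoints), but two of the steps you pass over are genuine gaps, not details.

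\textbf{Obtaining a Jordan domain.} You form $R_h$ (the union of grid squares \emph{inside} $Q$) and propose to ``fill in holes and, if necessary, shift the grid generically.'' This is not how the paper proceeds and it is not clear it can be made to work: near a narrow bottleneck of $Q$ thinner than $h$, or near a cusp, $R_h$ may be disconnected, and its boundary may self-intersect at a square corner in a way that no grid shift removes (the problem is caused by the geometry of $Q$, not by a degenerate alignment). Filling a bounded complementary component of $R_h$ is also not guaranteed to keep you inside $Q$. The paper instead lets $S$ be the union of squares that \emph{meet} $\partial Q$, defines $Q_\tau$ as the component of $\C\setminus S$ containing a specific connected skeleton $K$ (the inscribed disk $D$, conformal-image arcs $C_j^\delta$ to each vertex neighborhood, and truncations of near-optimal arcs $C_{a\epsilon\delta}$, $C_{b\epsilon\delta}$), and then \emph{proves} $\partial Q_\tau$ is a Jordan curve by using connectedness of $\partial Q$ to rule out corner self-intersections. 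That argument does not follow from genericity of the grid.

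\textbf{Vertex placement and side labeling.} Placing $\tilde v_j$ at ``a nearest point'' of $\partial\tilde Q$ to $v_j$ does not guarantee that the four boundary arcs of $Q_\tau$ you obtain correspond to the four sides of $Q$. At a sharp corner, $\partial\tilde Q$ can stay a fixed distance from $v_j$, and a nearest point may lie anywhere along either of the two incident sides, or inside a pocket of the boundary between them. The paper's placement --- following the arc $\tilde C_j = C_j(\tilde v_j, v_j)$ outward from the skeleton $K$ until it first meets $S$ --- is designed so that each side of $\partial Q_\tau$, outside the small disks $D(v_j, 2\delta)$, tracks a unique side of $\partial Q$; the paper then verifies this tracking explicitly. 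Without that property, the step you yourself flag as ``the main technical obstacle'' --- that the re-routed endpoints of the near-optimal arc land on the correct $a$-sides of $Q_\tau$ rather than on a $b$-side or in a pocket --- cannot be completed. Relatedly, the paper gets the arc $C_{a\epsilon\delta}$ inside $Q_\tau$ for free by building it into $K$ and choosing $s < \dist(K,\partial Q)/100$; your sketch leaves even that containment to be checked. The final ratio computation you give is fine once the distance estimates are secured, but as written the proof has a genuine hole at the two points above.
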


\begin{proof}
	Let $Q=Q(v_1,v_2,v_3,v_4)\in \mathcal{Q}(\tilde{L}) $, $\rho (z) = \sup \{ \rho>0: D(z,\rho)\subset Q \}$ for all $z\in Q$, $\rho_0=\sup\{ \rho(z): z\in Q \}$ and consider a closed disk $D=  \overline{D(z_0,\rho_0/2)}$ that lies in $Q$. We can map the quadrilateral $Q$ onto a rectangle $Rec(Q)$ using a conformal map $\phi$ so that $\phi (\partial_{a_1}Q)=(0,M)$, $\phi (\partial_{b_1}Q)=(M,M+i)$,  $\phi (\partial_{a_2}Q)=(i,M+i)$, $\phi (\partial_{b_2}Q)=(0,i)$, where $M=\text{Mod}(Q)$. Let $l_j$ be the line segment in the closure of $Rec(Q)$ connecting $\phi(z_0)$ to $\phi(v_j)$, for all $j\in \{1,2,3,4\}$. Let $C_j= \phi^{-1}(l_j)$ be Jordan arcs that lie in $Q$ except for their end points and connect $z_0$ to $v_j$, for all $j\in \{1,2,3,4\}$.
	
	By Lemma~\ref{Le_s_a_delta}, for each $\eps>0$ there are some positive $\delta_\eps<\epsilon$ and Jordan arcs $C_{a\eps}$, $C_{b\eps}$ connecting the $a$-sides and $b$-sides of $Q$, respectively, both with end points outside $\bigcup_{j=1}^4 \overline{D(v_j, \delta_\eps)}$ and with $\ell(C_{a\eps})\leq s_a(Q)+\epsilon$ and $\ell(C_{b\eps}) \leq s_b(Q)+\epsilon$. Fix positive $\epsilon < 10^{-3}\min\{s_a(Q),s_b(Q)\}$ and denote by $z_{a_1}\in \partial_{a_1}Q$, $z_{a_2}\in \partial_{a_2}Q$ the end points of $C_{a\eps}$ and by $z_{b_1}\in \partial_{b_1}Q$, $z_{b_2}\in \partial_{b_2}Q$ the end points of $C_{b\eps}$. Fix a positive $\delta<\rho_0/10$ satisfying all of the following inequalities:
	\begin{equation}\label{eq: approx_delta_z_a_z_b}
	\delta <\frac{\min\{|z_{a_i}-z_{b_j}|: i=1, 2, \, \, j=1, 2 \}}{100}  , 
	\end{equation}
	\begin{equation}\label{eq: approx_delta_z_a_vertices}
	\delta <\frac{\min\{|z_{a_i}-v_j|: i=1, 2, \, \, j=1, 2, 3, 4 \}}{100}  ,  
	\end{equation}
	\begin{equation}\label{eq: approx_delta_z_b_vertices}
	\delta <\frac{\min\{|z_{b_i}-v_j|: i=1, 2, \, \, j=1, 2, 3, 4 \}}{100}  ,  
	\end{equation}
	\begin{equation}\label{eq: approx_delta_sides_and_diam}
	\delta <\frac{\min\{\diam (\partial_{a_1}Q), \diam(\partial_{a_2}Q), \diam(\partial_{b_1}Q), \diam(\partial_{b_2}Q) \}}{100}  ,  
	\end{equation}
	\begin{equation}\label{eq: approx_delta_distances_of_sides}
	\delta <\frac{\min\{ \dist(\partial_{a_1}Q, \partial_{a_2}Q), \dist(\partial_{b_1}Q, \partial_{b_2}Q), \delta_\eps\}}{100}  .  
	\end{equation}
	
	Denote by $C_{a\eps\delta}$ the closed sub-arc of $C_{a\eps}$ from $\tilde{z}_{a_1}$ to $\tilde{z}_{a_2}$, where $\tilde{z}_{a_j}$ is the point such that $\ell(C_{a\eps}(\tilde{z}_{a_j}, z_{a_j}))=\delta$ for $j=1, 2$, and similarly denote by $C_{b\eps\delta}$ the closed sub-arc of $C_{b\eps}$ from $\tilde{z}_{b_1}$ to $\tilde{z}_{b_2}$, where $\tilde{z}_{b_j}$ is the point such that $\ell(C_{b\eps}(\tilde{z}_{b_j}, z_{b_j}))=\delta$ for $j=1, 2$.
	
	Moreover, for all $j\in \{1,2,3,4\}$, denote by $C_j^\delta$ the closed Jordan arc $C_j(z_0, \tilde{v}_j)$, where $\tilde{v}_j= r_j(t_j)$ for 
	$$t_j=\max \{ t\in [0,1]: r_j(t)\in \partial D(v_j, \delta), \, r_j((t,1])\subset D(v_j,\delta) \}$$
	and $r_j:[0,1] \rightarrow \C$ is a homeomorphism onto $C_j$ with $r_j(0)=z_0$. In other words, $\tilde{v_j}$ is the ``last" point of $C_j$ intersecting the boundary of $D(v_j,\delta)$ with the direction on $C_j$ from $z_0$ towards $v_j$, after which $C_j$ lies in $D(v_j,\delta)$.
	
	Similarly to how we defined $C_j$, $j=1,2,3,4$, we can find Jordan arcs $C_{0,a}$ and $C_{0,b}$ inside $Q$ that connect $z_0$ to $C_{a\eps\delta}$ and $C_{b\eps\delta}$, respectively. Set
	\begin{equation*}
	K= \left( \bigcup_{j=1}^4 C_j^\delta \right) \cup D \cup C_{a\eps\delta} \cup C_{b \eps \delta} \cup C_{0,a} \cup C_{0,b},
	\end{equation*}
	which is schematically depicted in Figure \ref{fig_connected_K} (it is not intended that the curves shown would be the actual curves obtained, among other things, by applying a conformal mapping to the quadrilateral shown in Figure \ref{fig_connected_K}).
	
	\begin{figure}[H]
		\centering
		\includegraphics[width=0.85\textwidth]{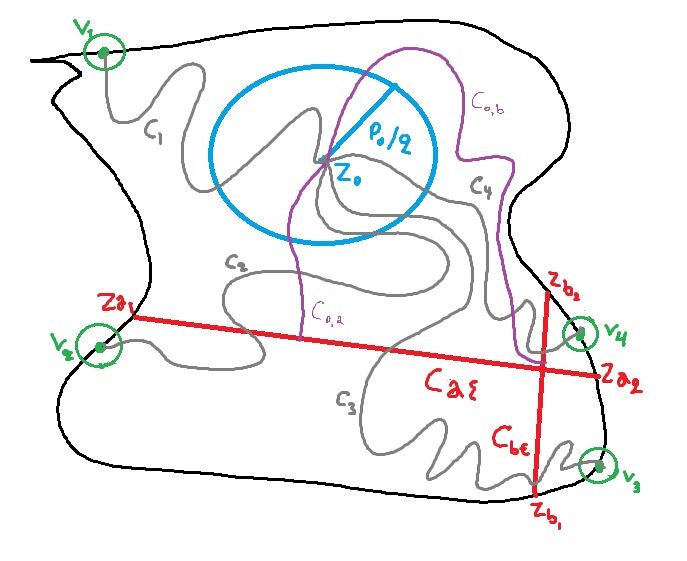}
		\caption{An example of a set $K$ constructed inside a quadrilateral $Q$.}
		\label{fig_connected_K}
	\end{figure}

	Similarly to how the points $\tilde{v}_j$ were defined, we set $a_{v_1}$ to be the unique point of $\partial_{a_1} Q$ with direction from $v_2$ towards $v_1$ that intersects the boundary of $D(v_1, \delta)$ such that the arc $\partial_{a_1} Q(a_{v_1},v_1)$ stays in $\overline{D(v_1, \delta)}$, and $b_{v_1}$ to be the unique point of $\partial_{b_2} Q$ with direction from $v_4$ towards $v_1$ that intersects the boundary of $D(v_1, \delta)$ such that the arc $\partial_{b_2} Q(b_{v_1},v_1)$ stays in $\overline{D(v_1, \delta)}$. In the same way we can define $a_{v_j}$, $b_{v_j}$ for all $j\in \{1,2,3,4\}$. Note that \eqref{eq: approx_delta_sides_and_diam} ensures that such points exist and \eqref{eq: approx_delta_distances_of_sides} guarantees that for all $j \in \{1,2,3,4\}$ the only sides of the boundary of $Q$ intersecting $D(v_j, 2 \delta)$ are the ones that meet at $v_j$.
	
	Let $\tilde{C}_j := C_j(\tilde{v_j},v_j)$ be the closed sub-arc of $C_j$ from $\tilde{v}_j$ to $v_j$ for all $j\in \{1,2,3,4\}$ and fix positive $d_1$ and $d_2$ so that for all $j\in \{1,2,3,4\}$ we have
	$$
	d_1 <  \dist (\partial Q(a_{v_j},b_{v_j})\cup \tilde{C}_j, 
	\overline{  (\partial Q \setminus \partial Q(a_{v_j},b_{v_j} )    )    
	\cap  D(v_j, \delta )  }  )  ,
	$$ where $\partial Q(a_{v_j},b_{v_j})$ denotes the closed sub-arc of $\partial Q$ passing through $a_{v_j}$, $b_{v_j}$ and $v_j$, and
	$$
	d_2 < \dist(\partial_{a'} Q, \partial_{b'} Q ),
	$$ where $\partial_{a'} Q:= \overline{\partial_a Q \setminus \bigcup_{j=1}^{4} \partial Q(a_{v_j}, b_{v_j})}$ and $\partial_{b'} Q:=\overline{\partial_b Q \setminus \bigcup_{j=1}^{4} \partial Q(a_{v_j}, b_{v_j})}$. Note that by definition of $a_{v_j}$ and $b_{v_j}$ all distances defined above are positive, even if $\partial Q$ were to contain an arc of $\partial D(v_j,\delta) $.
	
	We are now ready to start the approximation of $\partial Q$ by finitely many line segments. Fix some $s>0$ such that
	\begin{equation}\label{eq: side-length}
		s< \frac{\min\{ \dist(K, \partial Q), d_1, d_2, \delta \}}{100}  	\end{equation} and cover the plane with closed axes oriented squares of side length $s$, i.e., with sides parallel to the coordinate axes that have length $s$.

	Note that squares that do not intersect the closure of $\bigcup_{j=1}^4 D(v_j,\delta)$ and contain points of one of the sides of $Q$,  cannot contain points of any of the other three sides, because their diameter is much smaller than $d_2$ and $\delta$ (see \eqref{eq: approx_delta_sides_and_diam}).

Let $S_0$ be an arbitrary closed square like this that intersects $\partial Q$. Then there is a point $\zeta_1\in S_0\cap \partial Q$. If there is also a point $\zeta_2\in  S_0\cap K$, then
$$
\dist(K, \partial Q) \leq |\zeta_1-\zeta_2| \leq \diam S_0 = s\sqrt{2} ,
$$
which contradicts \eqref{eq: side-length}. 
Let $S$ be the union of those closed squares that intersect $\partial Q$. Hence
$$
\partial Q \subset S , \quad S\cap K=\emptyset .
$$
Since $S$ is a compact subset of ${\mathbb C}$, its complement ${\mathbb C}\setminus S$ is the union of open connected components, and each bounded component is a subset of $Q$. One of the bounded components contains the connected set $K$. We denote this component by $Q_{\tau}$. The boundary of 
$Q_\tau$  consists of finitely many line segments, each parallel to one of the coordinate axes. 

We next prove that $Q_\tau$ is a Jordan domain. To show that, it is enough to show there are no self-intersections on $\partial Q_\tau$. Assume towards a contradiction that there are self-intersections on $\partial Q_\tau$. Since $\partial Q_\tau$ consists only of sides of axes oriented squares, the only self-intersections that could occur are due to two of said squares intersecting each other at a corner, say $v_*$, for which there is an open disk $D(v_*,r_*)$ of a tiny radius $r_*\in (0, s/2)$ such that it only intersects the two aforementioned squares and no other squares intersecting $\partial Q_{\tau}$, and the domain $Q_\tau$. Let $q_1$ be a point in one of the connected components of $Q_\tau\cap D(v_*,r_*)$ and $q_2$ be a point in the other component. Since $Q_\tau$ is connected, there is a path $C_q \subset Q_\tau$ that connects $q_1$ to $q_2$ and intersects no point of $\partial Q_\tau$, and, as a result, no point of $\partial Q$. Note that by the choice of the squares contained in $S$, the corner $v_*$ cannot lie on the boundary of $Q$, since then all four squares that meet at $v_*$ would lie in $S$ and the boundary of $Q_\tau$ would have no self-intersection at $v_*$. Hence, the Jordan curve $\partial Q$ has points lying in the bounded domain bounded by $[q_1, v_*]\cup [v_*,q_2]\cup C_q$ and cannot intersect its boundary. But then it would be impossible to have points of $\partial Q$ in the square lying in the complement of the aforementioned domain and intersecting it at $v_*$, since $\partial Q$ is connected, leading to a contradiction. See Figure \ref{fig_no_self_intersect}.
	
	\begin{figure}[H]
		\centering
		\includegraphics[width=0.7\textwidth]{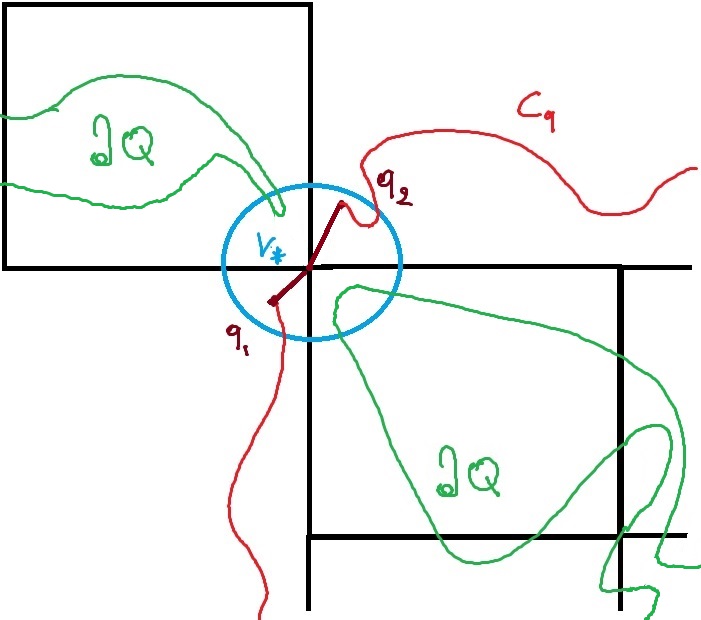}
		\caption{An illustration of what would happen if we assumed $\partial Q_\tau$ is not a Jordan curve.}
		\label{fig_no_self_intersect}
	\end{figure}

	Hence, $Q_\tau$ is a Jordan domain. We choose four vertices $v_j'$ for $1\leq j\leq 4$ on $\partial Q_\tau$ to obtain a quadrilateral $Q_\tau(v_1',v_2',v_3',v_4')$   that lies in $Q$, with boundary consisting of finitely many line segments. We choose the points $v_j'$ as follows.

	Note that squares that intersect $\partial Q(a_{v_1},b_{v_1})\cup \tilde{C}_1$
	cannot intersect points of $(\partial Q \setminus \partial Q(a_{v_1},b_{v_1}))\cap D(v_1, \delta)$ due to \eqref{eq: side-length}. Similarly, squares that intersect $\partial_{a'}Q\cup \partial_{b'} Q$ cannot contain points of $C_1$. So the only squares containing boundary points of $Q$ that may intersect $C_1$ are those containing points of $\partial Q (a_{v_1},b_{v_1})$. More specifically, they may only intersect points of $\tilde{C}_1=C_1(\tilde{v}_1,v_1)$ because the side lengths of the squares are also smaller than $\dist(C_1^\delta, \partial Q)/100$ by definition of $K$ and \eqref{eq: side-length}. Set $v_1'$ to be the first point of $S$ that $\tilde{C}_1$ intersects with direction from $\tilde{v}_1$ towards $v_1$. Similarly the other three $v_j'$ for $j\in \{ 2,3,4\}$ can be defined.
	
	Set $C_j' = C_j(z_0,v_j')$ to be the open sub-arc of $C_j$ from $z_0$ to $v_j'$ for all $j\in \{ 1, 2, 3,4\}$. Now $Q_\tau$  contains the connected set $ K \cup \left( \bigcup_{j=1}^4 C_j' \right )$.

Note that $Q\setminus Q_{\tau}$ may be large, and there may be points on $\partial Q$ that have a large distance to the set $\partial Q_{\tau}$. However, we shall show that every point of $\partial Q_{\tau}$ is close to some point of $\partial Q$. 
Indeed, outside small disks centered at the old vertices $v_j$ each point on a side of $\partial Q_{\tau}$ is close to the corresponding side of $\partial Q$.

	We will now prove that away from the disks $D(v_j,2\delta)$, we can connect within $Q$ boundary points of $Q_\tau$ to boundary points of the corresponding side of $Q$. Let $a_{11}$ be the last point of $\partial_{a_1}Q_\tau$ with direction from $v_1'$ towards $v_2'$ that intersects $\partial D(v_1,2\delta)$ so that the open sub-arc $\partial_{a_1} Q_\tau (a_{11},v_2')$ of $\partial_{a_1} Q_\tau$ does not intersect $\overline{D(v_1,2\delta)}$. Similarly, let $a_{12}$ be the last point of $\partial_{a_1}Q_\tau$ with direction from $v_2'$ towards $v_1'$ that intersects $\partial D(v_2,2\delta)$ so that the open sub-arc $\partial_{a_1} Q_\tau (v_1', a_{12})$ of $\partial_{a_1} Q_\tau$ does not intersect $\overline{D(v_2,2\delta)}$.

	Let $z\in \partial_{a_1}Q_\tau (a_{11},a_{12})$. Then $z$ lies in a closed square $S_z\subset S$. Indeed, at least one of the four boundary segments of $S_z$ is contained in $\partial Q_{\tau}$. By definition, $S_z\cap \partial Q\not= \emptyset$.
Let $T'$ be the last such segment  in $\partial S_z$ when moving along $ \partial_{a_1}Q_\tau (a_{11},a_{12})$ from $a_{11}$ towards $a_{12}$ in case there is more than one such segment. Then there is the next segment $T''$, after $T'$ in the same direction, on $\partial Q_{\tau}$ that is contained in the boundary of another closed square $S'$ in $S$ and is not contained in the boundary of $S_z$. The intersection of the closed squares $S_z$ and $S'$ is non-empty. Suppose that $\zeta_1\in S_z\cap \partial Q$ and $\zeta_2\in S'\cap \partial Q$. It is already clear by the definition of $d_2$ and \eqref{eq: side-length} that $S_z$ cannot contain points from two different sides of $Q$, and the same applies to $S'$. Now 
$|\zeta_1-\zeta_2|\leq 2\sqrt{2} s$. By \eqref{eq: side-length} and the definitions of $d_1$ and $d_2$, the points $	\zeta_1$ and $\zeta_2$ must belong to the same side of $Q$. Thus there is a unique side of $Q$ that is followed by the sub-arc $ \partial_{a_1}Q_\tau (a_{11},a_{12})$ of $ \partial_{a_1}Q_\tau $ all the way from $a_{11}$ to $a_{12}$, and this sub-arc stays bounded away by a definite distance from all other sides of $\partial Q$. (Note that this relation does not go in the other direction: this side of $\partial Q$ may contain points that are far away from every point of $\partial Q_{\tau}$.)   

We claim that this unique side of $\partial Q$, say $\gamma$, is $ \partial_{a_1}Q$. 
Now $\gamma$ contains a point $\zeta_0$ in a closed square that also contains $a_{11}$, and hence $ |  \zeta_0-v_1	 | \leq 2\delta + s\sqrt{2}$. By \eqref{eq: approx_delta_distances_of_sides} and \eqref{eq: side-length}, and since $v_1$ belongs to the closure of each of   $ \partial_{a_1}Q$ and $ \partial_{b_2}Q$, neither   $ \partial_{a_2}Q$ nor $ \partial_{b_1}Q$  can contain any point that close to $v_1$. Hence $\gamma$ can only be $ \partial_{b_2}Q$ nor $ \partial_{a_1}Q$. Similarly close to $a_{12}$, the side $\gamma$ contains a point $\zeta_0'$ with $ |  \zeta_0'-v_2	 | \leq 2\delta + s\sqrt{2}$. By \eqref{eq: approx_delta_distances_of_sides} and \eqref{eq: side-length} $ \partial_{b_2}Q$  cannot contain any point that close to $v_2$. Hence $\gamma$ must be $ \partial_{a_1}Q$.

	The argument runs similarly for points on other sides of $\partial Q_\tau$ that lie on parts of the sides that have exited the corresponding disks $D(v_j,2\delta)$ and do not intersect them again. This shows that those sub-arcs of the sides of $\partial Q_\tau$ are close to the corresponding sides of $\partial Q$ and only those sides of $\partial Q$. 
	
	We will first show that $s_a(Q_\tau)\leq s_a(Q)+\epsilon$, and the proof for $s_b(Q_\tau)\leq s_b(Q)+\epsilon$ follows similarly. Consider the definition of $C_{a\epsilon\delta}\subset Q$ and the fact that the end points of $C_{a\epsilon}$ lie outside $\bigcup_{j=1}^4 D(v_j,2\delta)$ due to \eqref{eq: approx_delta_z_a_vertices}. At each end of $C_{a\epsilon\delta}\subset Q$ we continue outwards along $C_{a\epsilon}$ until we come to the first point that lies on $\partial Q_{\tau}$. This gives rise to an arc $C$ of length $\leq s_a(Q)+\epsilon$ joining two points of $\partial Q_{\tau}$. Now we only need to show that these two points lie on the two $a$-sides of $Q_{\tau}$. It suffices to give the argument for one side and the case of the other side is similar. Let $\zeta_3=\tilde{z}_{a_1}$ be the end point of $C_{a\epsilon\delta}$ after which the remaining part of $C_{a\epsilon\delta}$ when going towards    $ \partial_{a_1}Q$ has length $\delta$. Let $\zeta_4\in \partial_{a_1}Q$ be the end point of $C_{a\epsilon}$.  Recall that $|\zeta_4-v_j|\geq \delta_{\epsilon}>100 \delta$ for all $j$ with $1\leq j\leq 4$. Hence the arc of $C_{a\epsilon}$  from $\zeta_4$ to $\zeta_3$ lies outside $D(v_j,99\delta)$ for $1\leq j\leq 4$. 
	
	Let $\zeta_5$ be the first point on $\partial Q_\tau$ that we encounter when moving towards $ \partial_{a_1}Q$ from $\zeta_3$ along $C_{a\epsilon}$. Then $\zeta_5$ lies on a side $\gamma$ of $\partial Q_\tau$, and by the definition of $S$, there is a point $\zeta_6\in \partial Q$ in a closed square $S_1$ in $S$ with $\zeta_5\in S_1$  such that $|\zeta_5-\zeta_6| \leq s\sqrt{2}$.
Now 	$|\zeta_5-v_j| > 99 \delta$ so that by what we have proved above, there is a unique side $\gamma'$ of $\partial Q$ associated with $\gamma$, and $\zeta_6\in \gamma'$. The point $\zeta_4$ lies on $ \partial_{a_1}Q$ and has distance $<  \delta + s\sqrt{2}$ from $\zeta_6$. Hence, by the definitions of $\delta$ and $s$, we have $\gamma' =  \partial_{a_1}Q$, and consequently $\gamma = \partial_{a_1}Q_{\tau}$, as desired.

	On the other hand, let $C_{a,\epsilon,\tau}$ be a Jordan arc that connects the $a$-sides of $Q_\tau$, lies in $Q_\tau$ except for its end points, which lie in the complement of $\bigcup_{j=1}^4 D(v_j',4\delta)$, and has length at most $s_a^{4\delta}(Q_\tau)+\epsilon$. But by the definition of $Q_\tau$ and the fact that the end points of $C_{a,\epsilon,\tau}$ lie outside $\bigcup_{j=1}^4 D(v_j,2\delta)$, there are line segments inside $Q$ of length at most $\sqrt{2}s$ connecting the end points of $C_{a,\epsilon,\tau}$ to points on $\partial Q$ and hence, by the argument already given, to the $a$-sides of $Q$. Thus, $s_a(Q)\leq s_a^{4\delta}(Q_\tau)+\epsilon +2\sqrt{2}s$. By the definition of $Q_\tau$ and \eqref{eq: approx_delta_sides_and_diam}, \eqref{eq: approx_delta_distances_of_sides}, the constant $4\delta>0$ satisfies \eqref{original_delta_bounds_diam} and \eqref{original_delta_bounds_dist}, so applying Lemma \ref{Le_s_a_delta} to $Q_\tau$ we get that $s_a(Q)\leq s_a(Q_\tau)+\epsilon +2\sqrt{2}s+16 \pi \delta$, which by the choice of $s$ implies that $s_a(Q) \leq s_a(Q_\tau) +\epsilon +67 \delta $. Similarly, the inequality $s_b(Q) \leq s_b(Q_\tau) +\epsilon +67 \delta$ also holds. Thus, we have shown that for all $\epsilon < 10^{-3} \min \{ s_a(Q), s_b(Q)\}$ we have	
	\begin{equation}\label{eq: comparable_epsilon a-distances}
	|s_a(Q)-s_a(Q_\tau)|\leq \epsilon +67 \delta<2\epsilon,
	\end{equation}
	and
	\begin{equation}\label{eq: comparable_epsilon b-distances}
	|s_b(Q)-s_b(Q_\tau)|\leq\epsilon +67 \delta<2\epsilon.
	\end{equation}

	Let $\tau \in (0,1/2]$. Applying \eqref{eq: comparable_epsilon a-distances} and \eqref{eq: comparable_epsilon b-distances} for $\epsilon = \tau \min\{s_a(Q),s_b(Q)\}/2$ we get
	
	\begin{equation}\label{eq: comparable a-distances}
	|s_a(Q)-s_a(Q_\tau)|\leq \tau \min\{s_a(Q),s_b(Q)\}\leq \tau s_a(Q),
	\end{equation}
	and
	\begin{equation}\label{eq: comparable b-distances}
	|s_b(Q)-s_b(Q_\tau)|\leq \tau \min\{s_a(Q),s_b(Q)\}\leq \tau s_b(Q).
	\end{equation}

	It is now clear by \eqref{eq: comparable a-distances} and \eqref{eq: comparable b-distances} that
	$$
\frac{(1-\tau)s_a(Q)}{(1+\tau)s_b(Q)} \leq 	\frac{s_a(Q_\tau)}{s_b(Q_\tau)}\leq \frac{(1+\tau)s_a(Q)}{(1-\tau)s_b(Q)},
	$$ but since $Q\in \mathcal{Q}(\tilde{L})$ we get
	$$
\frac{1-\tau}{1+\tau}\tilde{L} \leq	\frac{s_a(Q_\tau)}{s_b(Q_\tau)}\leq \frac{1+\tau}{1-\tau}\tilde{L}.
	$$ Hence, for $L_\tau=\frac{1+\tau}{1-\tau}\tilde{L}$  and because $\tau\leq 1/2$ we have
	$$
	Q_\tau \in \mathcal{Q}_{\text{ls}}(L_\tau) \subset \mathcal{Q}_{\text{ls}}(3\tilde{L})
	$$ as needed.
	
\end{proof}

\begin{Rem}
	Note that Lemma \ref{le:quad_approx} is particularly useful for $\tau$ very close to $0$. In that case it practically guarantees that the ``approximation" quadrilateral $Q_\tau\in \mathcal{Q}_{\text{ls}}(L_\tau)$ has in fact internal distances very close to those of the original quadrilateral $Q$. The reason we decided to state Lemma \ref{le:quad_approx} in its current form is to point out that all the approximations are contained in the collection $\mathcal{Q}_{ls}(L)$ for $L$ independent of $\tau$.
\end{Rem}

Thus, by Lemma \ref{le:quad_approx}, for $\tau$ very close to $0$, if we prove that a disk of radius $\delta' \max\{s_a(Q_\tau), s_b(Q_\tau)\}$ lies in $Q_\tau\in \mathcal{Q}_{\text{ls}}(L)$ for some $\delta'\in (0,1)$ depending only on $\tilde{L}$ and $L=3\tilde{L}$, then a disk of radius $\delta \max\{s_a(Q), s_b(Q)\}$ lies in $Q$ with $\delta=\delta' /4$, which proves Theorem \ref{thm_main_Quad}.

With the above reduction in mind, it is enough to prove the following:

\begin{Thm}\label{thm_sa}
	For every quadrilateral $Q$ in $\mathcal{Q}_{\text{ls}}(L)$ there is a disk of radius $r:= \frac{s_a(Q)}{1000L}$ that lies inside $Q$.
\end{Thm}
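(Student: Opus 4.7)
I would argue by contradiction. Set $r:=s_a(Q)/(1000L)$ and suppose that no disk of radius $r$ fits inside $Q$; then every point of $Q$ lies within Euclidean distance $r$ of $\partial Q$, so $Q$ is contained in the $r$-neighborhood of $\partial Q$. Since $\partial Q$ consists of finitely many horizontal and vertical segments, the nearest-point projection from any interior point $z\in Q$ to $\partial Q$ lands either on the relative interior of one such segment (in which case the connecting segment is itself horizontal or vertical) or on a vertex of $\partial Q$. Consequently, away from small neighborhoods of the finitely many vertices of $\partial Q$, every point of $Q$ admits either a short horizontal or a short vertical segment inside $Q$ joining it to $\partial Q$, of length less than $r$. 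In this sense $Q$ is everywhere transversally thin in at least one of the two coordinate directions.

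Next I would produce a short crossing arc in $Q$. Applying Lemma \ref{Le_s_a_delta} with a suitably small $\delta$, obtain a Jordan arc $\gamma_b\subset Q$ connecting the two $b$-sides of $Q$, of length at most $s_b(Q)+1$, whose endpoints stay outside neighborhoods of the vertices. Following $\gamma_b$ from one $b$-side to the other, and attaching at each point the short horizontal or vertical transversal described above, I expect to isolate a location along $\gamma_b$ where such a transversal can be extended to an arc in $Q$ of total length $O(r)$ that joins the two opposite $a$-sides of $Q$. This contradicts the definition of $s_a(Q)$, since $O(r)=O(s_a(Q)/(1000L))<s_a(Q)$ once the implied constants are controlled. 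If instead the constructed short arc ends up joining the two $b$-sides with total length $O(r)$, then $s_b(Q)=O(r)$, and the inequality $s_a(Q)\le L\, s_b(Q)$ coming from $Q\in \mathcal{Q}_{\text{ls}}(L)$ yields the same contradiction.

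The main obstacle is verifying that the short crossing arc obtained by piecing together local transversals really lands on two opposite sides of the same type ($a$ or $b$), rather than hitting the same side twice or mixing one $a$-side with one $b$-side. This is a combinatorial task about how the axis-parallel tube structure of $Q$ winds around its four labeled boundary arcs $\partial_{a_1}Q,\partial_{b_1}Q,\partial_{a_2}Q,\partial_{b_2}Q$ taken in cyclic order. I expect the continuity of the local transversal direction as one moves along $\gamma_b$, together with a careful local analysis near the reflex corners of $\partial Q$ (where the axis-parallel tube may turn), to force the pair-of-opposite-sides conclusion. The generous constant $1000L$ in the target radius is designed to absorb the factors of $\sqrt{2}$, $2$, and $\pi$ that accumulate in this analysis.
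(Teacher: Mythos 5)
Your opening reduction is sound: if no disk of radius $r$ fits inside $Q$, then every interior point is within distance $r$ of $\partial Q$, and for an axis-parallel boundary the foot of the nearest-point projection is either on the relative interior of a segment (giving a short horizontal or vertical segment) or at a vertex. But the step from this to producing a short \emph{crossing} arc is where the argument genuinely breaks down, and it is not merely a matter of bookkeeping. Knowing that some point $z$ has a short segment to the boundary in one direction tells you nothing about the opposite direction; the ray from $z$ in the opposite direction may travel arbitrarily far before meeting $\partial Q$. So ``everywhere within $r$ of the boundary'' is a one-sided local statement, and it does not by itself create any arc in $Q$ of length $O(r)$ joining two boundary points, let alone two points on opposite sides of the same type. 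Your phrase ``attaching at each point the short horizontal or vertical transversal \dots to isolate a location along $\gamma_b$ where such a transversal can be extended to an arc of total length $O(r)$'' is precisely the assertion that needs proof, and nothing in the preceding sentences supplies it. The acknowledged ``main obstacle'' (controlling which pair of labeled sides the short arc joins) is real, but there is an earlier and more basic obstacle: the short arc of length $O(r)$ with \emph{both} endpoints on $\partial Q$ has not been produced at all.

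The paper takes an essentially dual and more robust route. Rather than assuming no disk fits and trying to manufacture a cheap crossing, it fixes a near-minimal $a$-crossing arc $C_\epsilon$ of length $\leq s_a + \epsilon$ and studies the geometry of $Q$ near a point $w_0$ in the middle of $C_\epsilon$. Proposition~\ref{prop:step3} shows $C_\epsilon$ must exit the disk $D(w_0,R)$, $R=10r$, in both directions (otherwise the radial projection onto $\partial D(w_0,R)$ would give a strictly shorter $a$-crossing). Proposition~\ref{prop:bdryannul} then shows that $D(w_0,R)\setminus C_\epsilon$ has two components and that at least one of them, say $D^+$, meets $\partial Q$ only in two tiny lens-shaped regions near where $C_\epsilon$ crosses $\partial D(w_0,R)$: a boundary point deep inside $D^+$ would necessarily lie on a $b$-side (an $a$-side point would again shortcut $C_\epsilon$), and $b$-side points on both sides of $C_\epsilon$ would give a $b$-crossing of length $<32R = 32 s_a/(100L) < s_b$, contradicting $Q\in\mathcal{Q}_{\text{ls}}(L)$. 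That second step is exactly where the ratio hypothesis $s_a/s_b\le L$ enters, and it is the ingredient your proposal never invokes in a load-bearing way. Finally, a disk of radius $r$ is placed explicitly inside $D^+$. To repair your sketch you would essentially have to rediscover these propositions: the minimal-crossing-arc viewpoint is what converts the vague ``thin tube'' picture into precise statements about which sides can appear near a given interior point, and the $s_a$ versus $s_b$ comparison is what rules out the tube being pinched from both sides.
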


Indeed, suppose $Q(v_1,v_2,v_3,v_4) \in \mathcal{Q}_{\text{ls}}(L)$. Then $Q'=Q(v_2,v_3,v_4,v_1)$ also lies in $\mathcal{Q}_{\text{ls}}(L)$, since $\partial_{a_1}Q'=\partial_{b_1} Q$, $\partial_{a_2}Q' = \partial_{b_2} Q$, $\partial_{b_1} Q' = \partial_{a_1}Q$ and $\partial_{b_2} Q'= \partial_{a_2} Q$, which implies $s_a(Q')=s_b(Q)$ and $s_b(Q')=s_a(Q)$. Applying Theorem \ref{thm_sa} to $Q$ and $Q'$ along with Proposition~A proves Theorem \ref{thm_main_Quad}.

Note that the opposite implication might not be true, i.e., Theorem \ref{thm_main_Quad} does not necessarily imply Theorem \ref{thm_sa}, since the radius of the disk contained in $Q$ might not have exactly the form $r= \frac{s_a(Q)}{1000L}$. However, by Proposition \ref{Quad_Prop:A} and the discussion afterwards, Theorem \ref{thm_main_Quad} is in fact equivalent to the following:
\begin{Thm}
	For every $L\geq 1$ there is a constant $\delta\in (0,1)$ depending only on $L$ such that every quadrilateral Q in $\mathcal{Q}_{\text{ls}}(L)$ contains a disk of radius $\delta \max\{ s_a(Q),s_b(Q) \}$.
\end{Thm}

\section{Proof of Theorem \ref{thm_sa}}
	
	Let $Q=Q(v_1,v_2,v_3,v_4) \in \mathcal{Q}_{\text{ls}}(L)$ and set $s_a:= s_a(Q)$, $s_b:= s_b(Q)$. Note that one can find a Jordan arc that is the union of finitely many line segments, connects the $a$-sides of $Q$ with length $s_a$ and lies in the closure of $Q$. The reason why such an arc exists lies in the fact that $\overline{Q}$ is a connected union of finitely many closed squares with sides parallel to the axes (due to Lemma \ref{le:quad_approx}). Every Jordan arc connecting the $a$-sides of $Q$ inside the interior of $Q$ intersects at most finitely many squares, which forms a chain of squares connecting one side to the other. There is a minimal number of squares needed to perform such a connection. Note that in each such square, the part of the arc lying inside can be made shorter by connecting  with a line segment the first point from where the arc enters the square with the last point from where the arc exits. This line segment may either lie in the interior of the square, or on its boundary. Thus, by definition of $s_a$ and the finiteness of the number of minimal square-chains connecting the $a$-sides, an arc lying in $\overline{Q}$ that connects the $a$-sides while being of the shortest length possible can indeed be found (as a union of sides of the squares in $Q$ and line segments lying in said squares).
	
	Define $r$ as in the statement of Theorem~\ref{thm_sa}. 
	We fix the following:
	\begin{itemize}
		\item Let $C_a \subset \overline{Q}$ be a Jordan arc that is the union of finitely many line segments and connects the $a$-sides of $Q$ with $\ell(C_a)=s_a$. 
		\item For small $\epsilon, \epsilon'>0$ with $\epsilon<10^{-3}r$ let $\Ce := \bigcup_{i=1}^N [z_i,z_{i+1}]$ be a finite union of line segments that connects the $a$-sides of $Q$ with $\ell(\Ce)\leq s_a+\epsilon$ and $$\Ce \subset \bigcup\limits_{\tilde{w}\in C_a}D(\tilde{w},\epsilon').$$ We essentially ``modify" $C_a$ within an $\epsilon'$-neighborhood to get the arc $C_{\epsilon}$, which lies in the interior of $Q$ (except its end points) and has length at most $s_a+\epsilon$. Note that the arcs $C_a$ and $C_{\eps}$ can differ in general (see Figure \ref{RefFig}).
		
		In addition, we can choose $\Ce$ such that $\Ce \setminus \{z_1, z_{N+1}\} \subset Q$ and $z_1\in \partial_{a_1}Q \setminus \partial_bQ$, $z_{N+1}\in \partial_{a_2}Q \setminus \partial_bQ$.
		\item Write $R:=10r=\frac{s_a(Q)}{100L}$, 
		\begin{equation}\label{farendpnts}
			F:= \{ w\in \Ce: \, \min\{\ell(\Ce(z_1,w)), \ell(\Ce(w,z_{N+1})) \}\geq 15R \},
		\end{equation}
		and
		\begin{equation}\label{furtherendpnts}
		F':= \{ w\in \Ce: \, \min\{\ell(\Ce(z_1,w)), \ell(\Ce(w,z_{N+1}))\}\geq 16R+2\epsilon \},
		\end{equation} so that $F$ and $F'$ are two sets of points of $\Ce$ that are sufficiently far from the end points of $\Ce$.
	\end{itemize}
	
	It is not difficult to see that if a line segment $(x,y) \subset Q$ intersects $C_a$ with $x, y$ on the same $b$-side, then the intersection would either be one of the end points of $(x,y)$ or the entire line segment $[x,y]$. That is because an arc connecting the $a$-sides with the shortest length would not enter a region of $Q$ enclosed by a sub-arc of one of the $b$-sides and a line segment lying in $Q$. This ensures that $\Ce$ can be chosen so that there is no line segment with end points on the same $b$-side that intersects $\Ce$ and lies in $Q$ (except for its end points).

	We split the proof of Theorem \ref{thm_sa} in three Propositions. The first one asserts that the arc $\Ce$ exits every disk centered at points of $F$ and of radius $R$ in both directions (towards $z_1$ and $z_{N+1}$).

    \begin{Prop}\label{prop:step3}
		Let $w_0\in F$. If $\tilde{z}_1\in \Ce(z_1,w_0)$ with $\ell(\Ce(\tilde{z}_1,w_0)) \geq 15R$, then $\Ce(\tilde{z}_1, w_0)\cap (\C \setminus \overline{D(w_0,R)})\neq \emptyset$. Similarly, if $\tilde{z}_{N+1}\in \Ce(z_{N+1},w_0)$ with $\ell(\Ce(\tilde{z}_{N+1},w_0)) \geq 15R$, then $\Ce(\tilde{z}_{N+1}, w_0)\cap (\C \setminus \overline{D(w_0,R)})\neq \emptyset$.
	\end{Prop}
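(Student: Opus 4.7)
The plan is a proof by contradiction: suppose that $\Ce(\tilde{z}_1,w_0)\subset \overline{D(w_0,R)}$. The first observation is then $|\tilde{z}_1-w_0|\leq R$, while by hypothesis the arc length of $\Ce$ between these two points is at least $15R$. I want to exploit this large gap between Euclidean distance and arc length along $\Ce$ to exhibit a Jordan arc in $Q$ joining the two $a$-sides of length strictly less than $s_a=s_a(Q)$, contradicting the definition of $s_a$ as an infimum.

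The main candidate for the contradiction is simply the straight segment $[\tilde{z}_1,w_0]$, which has length at most $R$. If this segment happens to lie entirely in $Q$, the compact connected set
$$\Ce(z_1,\tilde{z}_1)\cup [\tilde{z}_1,w_0]\cup \Ce(w_0,z_{N+1})$$
is contained in $Q$ except at the two points $z_1\in\partial_{a_1}Q$ and $z_{N+1}\in\partial_{a_2}Q$, so from it one extracts a Jordan arc in $\overline{Q}$ joining the $a$-sides. Its total length is at most $s_a+\epsilon-15R+R=s_a+\epsilon-14R$, and since $\epsilon<10^{-3}r<R$ this is strictly less than $s_a$, the desired contradiction with the definition of $s_a(Q)$.

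The technical core of the proof, and what I expect to be the main obstacle, is the case when $[\tilde{z}_1,w_0]$ fails to lie in $Q$, i.e., the straight segment crosses $\partial Q$ inside $\overline{D(w_0,R)}$. The plan is to replace it by a path within the connected component of $\overline{D(w_0,R)}\cap Q$ containing both $\tilde{z}_1$ and $w_0$; these do lie in a common component because the sub-arc $\Ce(\tilde{z}_1,w_0)\subset Q$ itself connects them there. Using the hypothesis $Q\in \mathcal{Q}_{\text{ls}}(L)$, so that $\partial Q$ consists of finitely many axis-parallel segments, I would detour the offending crossings of $[\tilde{z}_1,w_0]$ with $\partial Q$ by following short pieces of those boundary segments together with auxiliary short line segments inside $\overline{D(w_0,R)}$. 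The key technical estimate is to bound the total length of such a detour by a controlled multiple of $R$ that stays safely below $15R-\epsilon$; once this is done, the same extraction of a Jordan sub-arc joining the $a$-sides produces a curve in $\overline{Q}$ of length strictly less than $s_a$ and the required contradiction. The symmetric statement for $\Ce(\tilde{z}_{N+1},w_0)$ follows by the same argument with the roles of $z_1$ and $z_{N+1}$ interchanged.
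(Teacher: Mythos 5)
Your first case (when $[\tilde{z}_1,w_0]\subset Q$) is correct and matches the spirit of the length-comparison arguments in the paper, but the ``technical core'' you identify is precisely where the proposal breaks down, and it cannot be repaired along the lines you sketch. The issue is that there is no a priori bound on the length of $\partial Q\cap\overline{D(w_0,R)}$: even for $Q\in\mathcal{Q}_{\text{ls}}(L)$ with the given constraints, one $b$-side can send a long thin axis-parallel ``finger'' (or a spiral of such fingers) into $D(w_0,R)$, and a straight chord $[\tilde{z}_1,w_0]$ that crosses it forces any detour staying in $Q\cap\overline{D(w_0,R)}$ to traverse essentially the whole boundary of the finger. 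Your claim that the detour can be bounded ``by a controlled multiple of $R$'' is exactly the hard part, and you give no mechanism for it; in particular your argument never invokes $s_b$ or the ratio bound $s_a/s_b\leq L$, nor the property (set up just before the propositions) that $\Ce$ is chosen so no segment in $Q$ with both endpoints on the same $b$-side crosses it. But those are precisely the tools needed to rule out the finger/spiral obstruction, so an argument that doesn't engage with them cannot prove this statement.

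The paper avoids trying to shortcut from $\tilde{z}_1$ to $w_0$ directly. Instead it takes an inner sub-arc $\Ce(w_1,w_2)$ and pushes it \emph{radially outward} to $\partial D(w_0,R)$ via the map $g(z)=R(z-w_0)/|z-w_0|+w_0$, so the replacement arc has length at most $2\pi R$ automatically, with no dependence on $\partial Q$ at all. The nontrivial content is then to show that each radial segment $[w,g(w)]$ lies in $Q$. There the $s_a$-minimality excludes a boundary hit on an $a$-side; the no-same-$b$-side-chord property of $\Ce$ excludes a hit on the same $b$-side that $(w,w_0)$ hits; and a hit on the opposite $b$-side would yield a chord joining the two $b$-sides of length at most $R=s_a/(100L)<s_b$, contradicting the definition of $s_b$. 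This last step is exactly where the $s_b$ information enters, and it is the ingredient missing from your proposal. To fix your argument you would essentially be forced to reinvent this projection idea (or an equivalent way of replacing an arc by a path whose length is controlled independently of $\partial Q$ and whose membership in $Q$ is certified using both $s_a$ and $s_b$).
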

	
	\begin{proof}
		Let $w_0 \in F$ and $\tilde{z}_1 \in \Ce(z_1,w_0)$ with $\ell (\Ce(\tilde{z}_1,w_0)\geq 15R$. Assume towards a contradiction that $$\Ce(\tilde{z}_1,w_0) \subset \overline{D(w_0,R)}.$$
		
		Define the map $g:\overline{D(w_0,R)}\setminus\{w_0\}\rightarrow\partial D(w_0,R)$ by $$g(z):=\frac{R(z-w_0)}{|z-w_0|}+w_0$$ for all $z\in \overline{D(w_0,R)}\setminus\{w_0\}$. What $g$ does to a point $z$ of the closed punctured disk $\overline{D(w_0,R)}\setminus\{w_0\}$ is to map it to the point $g(z) \in \partial D(w_0,R)$ for which $(w_0,z) \subset (w_0,g(z))$.

		Let $w_1, w_2\in \Ce(\tilde{z}_1,w_0)$ with $\ell(\Ce(\tilde{z}_1,w_1))= 2R$ and $\ell(\Ce(w_0,w_2))= 2\epsilon +2R$. If $g(w_1)=g(w_2)$ then we can move along $\Ce$ and replace, for instance, $w_1$ with some $w'_1$ with $\ell(\Ce(\tilde{z}_1,w'_1)) \in [2R, 3R]$ and $g(w_1')\neq g(w_2)$, since $\Ce \subset \overline{D(w_0,R)}$ and it takes at most the length of $R$ to move to a different radius. As a result, we can assume $w_1, w_2$ do not lie on the same radius for the slightly "worse" scenario where
		\begin{equation}\label{w1}
			2R\leq\ell(\Ce(\tilde{z}_1,w_1))\leq3R,
		\end{equation}
		\begin{equation}\label{w2}
			\ell(\Ce(w_2,w_0))=2R+2\epsilon.
		\end{equation}
		
		Let $w \in \Ce(w_1,w_2)$, which implies that
		\begin{equation}\label{farz1}
		\ell(\Ce(\tilde{z}_1,w))\geq 2R,
		\end{equation}
		\begin{equation}\label{farw0}
		\ell(\Ce(w,w_0))\geq 2\epsilon+2R.
		\end{equation}
		
		If $(w_0,w)\subset \Ce$ then $\ell(\Ce(w_0,w))=|w-w_0|\leq R$ which contradicts \eqref{farw0}. Hence, $(w_0,w)\not\subset \Ce$. Additionally, since $\Ce$ is a union of finitely many line segments, there are points of $\Ce(w_1,w_2)$ not on the boundary $\partial D(w_0,R)$. So we can pick $w\notin \partial D(w_0,R)$.
		
		If $(w,w_0) \subset Q$ then we can replace $\Ce (w,w_0)$ by $[w,w_0]$ and, hence, we should have
		$$s_a+\epsilon-\ell(\Ce(w,w_0))+|w-w_0|\geq s_a,$$ so that
		$$\epsilon+|w-w_0| \geq \ell(\Ce(w,w_0)),$$
		which by \eqref{farw0} implies that
		$$\epsilon+ R \geq 2R+2 \epsilon,$$
		which is a contradiction. Therefore, $(w,w_0)\cap\partial Q\neq \emptyset$.
		
		Let $z_w\in \partial Q\cap (w,w_0)$ be the boundary point that is closest to $w$, i.e., with minimum $|z-w|$ among all boundary points $z$ of $Q$ on $(w,w_0)$. Since $(z_w,w)\subset Q$, if $z_w \in \partial_a Q$ then replacing part of $\Ce$ by the segment $[z_w, w]$ we see that either
		$$z_w\in \partial_{a_1}Q \Rightarrow s_a+\epsilon-\ell(\Ce(z_1,w))+|z_w-w|\geq s_a,$$
		or
		$$z_w\in \partial_{a_2}Q \Rightarrow s_a+\epsilon-\ell(\Ce(z_{N+1},w))+|z_w-w|\geq s_a.$$
		Recalling that $w\in \Ce(\tilde{z}_1,w_0)\subset \Ce(z_1,w_0)$, in both cases, because of \eqref{farz1} and \eqref{farendpnts} respectively, we would get that
		$$\epsilon\geq2R-|z_w-w| \geq R,$$
		which is a contradiction because we chose $\epsilon<10^{-3}r<R$. Hence, $z_w \in \partial_b Q$.
		
		Suppose $z_w \in \partial_{b_1}Q$. The proof is identical if $z_w \in \partial_{b_2}Q$.
		
			Suppose $[w,g(w)]\cap \partial Q \neq \emptyset$. Then there exists $B_w \in \partial Q \cap [w,g(w)]$ that is closest to $w$, i.e., with minimum $|B_w-w|$. If $B_w \in \partial_{a_1} Q$, then
			$$s_a+\epsilon-\ell(\Ce(z_1,w))+|B_w-w|\geq s_a,$$
			which by \eqref{farz1} implies that
			$$\epsilon-2R+R>0,$$ but that is a contradiction since $\epsilon<10^{-3}r<R$.
			
			Similarly, if $B_w\in \partial_{a_2}Q$ then $$s_a+\epsilon-\ell(\Ce(w,z_{N+1}))+|B_w-w|\geq s_a$$
			which by \eqref{farendpnts} leads to the contradiction $\epsilon-15R+R>0$. 
			
			Hence, $B_w \in \partial_bQ$. However, we assumed that there are no line segments with end points on the same $b$-side intersecting $\Ce$, so $B_w \in \partial_{b_2}Q$, which means that $[B_w,z_w]$ connects the two $b$-sides of $Q$, so 
			$$s_b\leq|B_w-z_w|\leq R= \frac{s_a}{100L} <s_b,$$ which is a contradiction.
			
			As a result, there is no $B_w \in \partial Q \cap [w,g(w)]$, which means that $g(w) \in Q$.
		
        Since $w$ was arbitrary, we have shown that the arc 
        $$A= \{ g(w): w\in \Ce(w_1,w_2) \}$$
	lies entirely in $Q$, and so does every segment $[w, g(w)]$ for $w \in \Ce (w_1, w_2)$. Hence, the arc 
        $$C'_\epsilon=\Ce(z_1,w_1)\cup A \cup \Ce(w_2, z_{N+1})\cup[w_1,g(w_1)]\cup [w_2,g(w_2)] \subset Q$$ joins the $a$-sides of $Q$ and needs to have length greater than or equal to $s_a$. But
    
    $$\ell(C'_\epsilon)= \ell(C_\epsilon(z_1,w_1))+\ell(A)+\ell(C_\epsilon(w_2,z_{N+1}))+|w_1-g(w_1)|+|w_2-g(w_2)|.$$
    So $\ell(\Ce')\geq s_a$ implies
    $$
    s_a+\epsilon-\ell(\Ce(w_1,w_2))+|w_1-g(w_1)|+|w_2-g(w_2)|+\ell(A)\geq s_a,
    $$
    which leads to
    $$
    \epsilon+2R+2\pi R\geq \ell(\Ce(w_1,w_2))=\ell(\Ce(\tilde{z}_1,w_0))-\ell(\Ce(\tilde{z}_1,w_1))-\ell(\Ce(w_2,w_0)).
    $$ But by \eqref{w1} and \eqref{w2} the above implies that $3\epsilon\geq R$, which is a contradiction. This finishes the proof for $\tilde{z}_1$. The proof is similar for $\tilde{z}_{N+1}$.
	\end{proof}

	The second Proposition asserts that every disk centered at points of $F'$ and of radius $R$ is split into two components by $\Ce$, only one of which may include boundary points of $Q$ outside the disk with same center of radius $R-\epsilon$ and within neighborhoods of the end points of the sub-arc of $\Ce$ lying in the disk (see Figure \ref{fig_bdry_in_nbh}).
	\begin{Prop}\label{prop:bdryannul}
		For every $w_0 \in F'$ there are points $\woa\in \Ce(z_1,w_0)\cap\partial D(w_0,R)$ and $\wob\in \Ce(w_0,z_{N+1})\cap\partial D(w_0,R)$ so that $D(w_0,R)\setminus \Ce(\woa,\wob)$ has exactly two connected components with closures $D^+$ and $D^-$. Moreover, at least one of $D^+\cap \partial Q$, $D^-\cap \partial Q$ is contained in $(D(\woa,2 \epsilon)\cup D(\wob,2 \epsilon))\setminus D(w_0,R-\epsilon)$.
	\end{Prop}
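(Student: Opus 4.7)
The plan is to first construct the candidate crosscut endpoints $\woa$ and $\wob$ by tracing $\Ce$ inward from $w_0$ in each direction, then use the resulting crosscut structure to decompose $D(w_0,R)$, and finally prove the separation statement by a contradiction argument that shortcuts $\Ce$.

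First step (construction of the crosscut): Define $\woa$ as the first point of $\Ce$ met when traversing $\Ce(z_1,w_0)$ backward from $w_0$ that lies on $\partial D(w_0,R)$; since $w_0 \in F'$ gives $\ell(\Ce(z_1,w_0)) \geq 16R+2\epsilon > 15R$, Proposition~\ref{prop:step3} applied with $\tilde z_1 = z_1$ ensures $\Ce(z_1,w_0)$ leaves $\overline{D(w_0,R)}$, so $\woa$ exists. Define $\wob$ symmetrically on $\Ce(w_0,z_{N+1})$. By construction $\Ce(\woa,\wob)\setminus\{\woa,\wob\}$ lies in the open disk $D(w_0,R)$, so $\Ce(\woa,\wob)$ is a crosscut of the Jordan domain $D(w_0,R)$. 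A standard Jordan-curve argument (treating $\Ce(\woa,\wob)$ together with either of the two subarcs of $\partial D(w_0,R)$ joining $\woa$ to $\wob$ as a Jordan curve) shows that $D(w_0,R)\setminus \Ce(\woa,\wob)$ has exactly two connected components; write their closures $D^+$ and $D^-$.

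Second step (length budget): By the contrapositive of Proposition~\ref{prop:step3} applied to $\tilde z_1 = \woa$, the inclusion $\Ce(\woa,w_0) \subset \overline{D(w_0,R)}$ forces $\ell(\Ce(\woa,w_0)) < 15R$, and symmetrically $\ell(\Ce(w_0,\wob)) < 15R$. Combined with $\ell(\Ce(z_1,w_0)) \geq 16R+2\epsilon$, this yields $\ell(\Ce(z_1,\woa)) > R+2\epsilon$ and $\ell(\Ce(\wob,z_{N+1})) > R+2\epsilon$, a reserve of length on each end of $\Ce$ that will power the shortcut arguments below.

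Third step (separation by contradiction): Suppose, toward a contradiction, that both $D^+\cap\partial Q$ and $D^-\cap\partial Q$ contain a point outside $A := (D(\woa,2\epsilon) \cup D(\wob,2\epsilon)) \setminus D(w_0,R-\epsilon)$; pick $p^\pm \in (D^\pm \cap \partial Q)\setminus A$. Since $\Ce(\woa,\wob)\setminus\{\woa,\wob\} \subset Q$ and $\partial Q \cap \Ce(\woa,\wob) = \emptyset$, the points $p^\pm$ lie in the respective open components. Split by which of the four sides each $p^\pm$ belongs to. In the representative case $p^+ \in \partial_{a_1}Q$, choose $w^+ \in \Ce(\woa,\wob)$ near $w_0$ and consider the candidate shorter $a$-connecting curve $[p^+, w^+] \cup \Ce(w^+, z_{N+1})$. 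Its length is at most $2R + s_a + \epsilon - \ell(\Ce(z_1,w^+))$, which is strictly less than $s_a$ whenever $\ell(\Ce(z_1,w^+)) > 2R+\epsilon$, a bound that holds by step two; this contradicts the definition of $s_a$. If both $p^+$ and $p^-$ lie on $b$-sides, then either they lie on opposite $b$-sides and a path of length at most $2R = s_a/(50L) < s_b$ gives a contradiction with the definition of $s_b$, or they lie on the same $b$-side and the line segment joining them crosses $\Ce(\woa,\wob)$ (since $p^+$ and $p^-$ lie in different components), violating the prescribed property of $\Ce$ that no line segment in $Q$ with endpoints on a single $b$-side meets $\Ce$.

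The main obstacle is verifying that the candidate shortcut segments $[p^\pm,w^\pm]$, and the analogous line segments in the $b$-side cases, actually lie inside $Q$ rather than escaping through a pocket of $\partial Q$. This is where the precise form of $A$ does the essential work: the condition $p^\pm \notin A$ splits into two sub-cases, $p^\pm \in D(w_0,R-\epsilon)$ (so $p^\pm$ is at distance $\geq \epsilon$ from $\partial D(w_0,R)$, giving room for a straight-line shortcut to remain in $Q$) or $p^\pm \notin D(\woa,2\epsilon) \cup D(\wob,2\epsilon)$ (so $p^\pm$ stays away from the crosscut endpoints, ruling out narrow-neck escapes). The polygonal structure of $\partial Q$ coming from $Q \in \mathcal{Q}_{\text{ls}}(L)$, together with the uniform closeness of $\Ce$ to $C_a$ within $\epsilon'$, then allows one to slightly perturb the shortcut segments into $Q$ and complete the case analysis.
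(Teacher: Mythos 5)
The construction of $\woa$, $\wob$ and the length budgets $\ell(\Ce(\woa,w_0))<15R$, $\ell(\Ce(w_0,\wob))<15R$ match the paper, and the elimination of $a$-sides by a length comparison is in the right spirit. However, there is a genuine gap at the step you yourself flag as the ``main obstacle'': the shortcut segments $[p^{\pm},w^{\pm}]$ and $[p^+,p^-]$ are not in general contained in $Q$, and the remark that one can ``slightly perturb the shortcut segments into $Q$'' does not resolve this. The paper avoids the problem by \emph{not} using $\tilde z_\pm$ directly: it replaces $\tilde z_\pm$ with the boundary point $z_\pm$ on $[\tilde z_\pm, w_0]$ nearest $w_0$, then takes $w_\pm$ to be the point of $\Ce(\woa,\wob)$ on $[z_\pm,w_0]$ nearest $z_\pm$, so that by construction $(z_\pm,w_\pm)\subset D^\pm\cap Q$. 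Without some such device the length-shortcut inequalities you write down are not licensed, because the competitor curve must lie in $Q$.

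The more serious missing piece is the case where both points lie on the same $b$-side, which is in fact the generic and hardest case. You invoke the agreed property that no segment in $Q$ with both endpoints on one $b$-side meets $\Ce$; but that property applies only to segments lying in $Q$, and you have not shown $(p^+,p^-)\subset Q$. Indeed, two boundary points of a Jordan domain may be joined by a chord that repeatedly leaves the domain. The paper handles this case via an essentially topological argument: it assumes $\Ce$ avoids both rays $(z_-,w_-)$ and $(z_+,w_+)$, builds a thin tubular neighborhood $N_{\Ce}$ of $\Ce$ meeting the two rays in two distinct components, identifies these components with the two sides $Q_1,Q_2$ of $Q\setminus\Ce$, and then transplants the picture to the rectangle $Rec(Q)$ by the conformal map $\phi$ to get a contradiction. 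From there it shows $\Ce$ must cross $(z_+,w_+)$ at a point $w_+'\notin\Ce(\woa,\wob)$, and it is this crossing that produces, through a length computation, the quantitative conclusion $|\tilde z_+-\woa|\le 2\epsilon$ or $|\tilde z_+-\wob|\le 2\epsilon$ together with $|\tilde z_+-w_0|\ge R-\epsilon$. Your proposal contains none of this topological machinery and does not explain how the precise containment in $(D(\woa,2\epsilon)\cup D(\wob,2\epsilon))\setminus D(w_0,R-\epsilon)$ would emerge. As written, the proposal would at best be a sketch that stops exactly where the real work begins.
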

	
	\begin{figure}[H]
		\centering
		\includegraphics[width=0.7\textwidth]{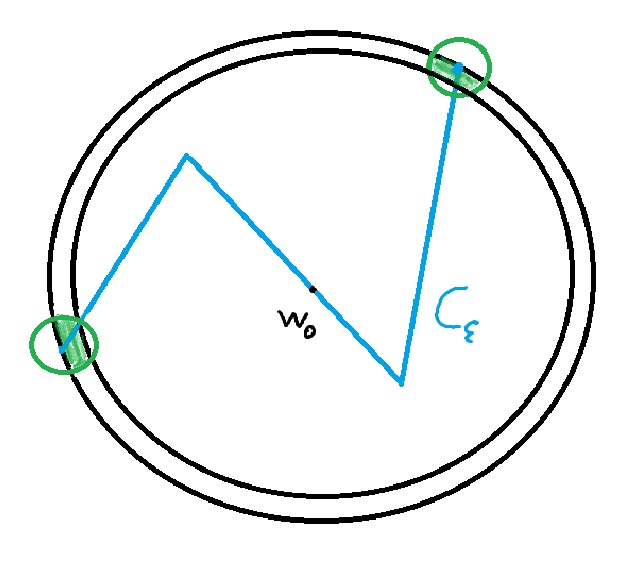}
		\caption{Boundary points of $Q$ can only lie in one component and the shaded areas.}
		\label{fig_bdry_in_nbh}
	\end{figure}
	
	\begin{proof}
		Let $w_0\in F'$. Note that since $F'\subset F$, by Proposition \ref{prop:step3} for $\tilde{z}_1=z_1$ there are two points $\woa\in \Ce(z_1,w_0)\cap\partial D(w_0,R)$ and $\wob\in \Ce(w_0,z_{N+1})\cap\partial D(w_0,R)$ with minimal $\ell(\Ce(\woa,w_0))$ and $\ell(\Ce(w_0,\wob))$ respectively so that $D(w_0,R)\setminus \Ce(\woa,\wob)$ has exactly two connected components, say $D^+$ and $D^-$. In addition, if $\ell(\Ce(\woa,w_0))\geq 15R$, by Proposition \ref{prop:step3} for $\tilde{z}_1=\woa$ we would get a contradiction regarding the minimality of $\ell(\Ce(\woa,w_0))$ among the points of $\Ce$ on $\partial D(w_0,R)$. Similarly for $\tilde{z}_{N+1}=\wob$, we conclude that $\ell(\Ce(\woa,w_0))< 15R$ and $\ell(\Ce(w_0,\wob))< 15R$. In the arguments that follow, note that $D^+\cup D^-$ may still contain points of $\Ce \setminus \Ce (\woa,\wob)$.

		Assume without loss of generality that $D^-\cap \partial Q$ is not contained in $(D(\woa,2\epsilon)\cup D(\wob,2\epsilon))\setminus D(w_0,R-\epsilon)$. We will show that $D^+\cap\partial Q\subset (D(\woa,2\epsilon)\cup D(\wob,2\epsilon))\setminus D(w_0,R-\epsilon)$. Let $$\tilde{z}_-\in \left(D^-\cap \partial Q\right) \setminus \left( (D(\woa,2\epsilon)\cup D(\wob,2\epsilon))\setminus D(w_0,R-\epsilon) \right).$$ Denote by $z_-$ the boundary point of $Q$ on $[\tilde{z}_-,w_0]$ that lies in $D^-$ and is closest to $w_0$. Then, denote by $w_-$ the point of $\Ce(\woa,\wob)$ that lies on $[z_-,w_0]$ and is closest to $z_-$ (so $w_-$ could be $w_0$). Hence, we end up with $z_- \in D^-\cap \partial Q$, $w_- \in \Ce(\woa,\wob)$ and $(z_-,w_-)\subset D^-\cap Q$.
		
		If $z_-\in \partial_{a_1}Q$, then 
		$$s_a+\epsilon-\ell(\Ce(z_1,w_-))+|z_--w_-|\geq s_a,$$ so that
		$$\epsilon+R\geq \ell(\Ce(z_1,w_-))=\ell (\Ce(z_1,w_0))\pm \ell(\Ce(w_-,w_0)),$$
		with $``+"$ if $w_- \in \Ce(w_0,\wob)$ and $``-"$ if $w_-\in \Ce(\woa,w_0)$. In either case, since $\ell (\Ce(w_-,w_0))\leq\ell(\Ce(\woa,w_0))<15R$ and $w_0 \in F'$, the right hand side of the above inequality is greater or equal to $16R+2\epsilon-15R=R+2\epsilon$, which leads to the contradictory inequality $\epsilon<0$. We get a similar contradiction if we assume that $z_- \in \partial_{a_2}Q$. Hence, $z_-\in \partial_b Q$.
		
		Suppose $z_- \in \partial_{b_1}Q$. If $D^+ \cap \partial Q = \emptyset$ then the statement of the Proposition follows. Suppose $D^+\cap \partial Q \neq \emptyset$ and consider an arbitrary point $\tilde{z}_+ \in D^+\cap \partial Q$.  Denote by $z_+$ the boundary point of $Q$ on $[\tilde{z}_+,w_0]$ that lies in $D^+$ and is closest to $w_0$. Then, denote by $w_+$ the point of $\Ce(\woa,\wob)$ that lies on $[z_+,w_0]$ and is closest to $z_+$. Hence, we end up with $z_+ \in D^+\cap \partial Q$, $w_+ \in \Ce(\woa,\wob)$ and $(z_+,w_+)\subset D^+\cap Q$.

		Similarly to $z_-$, we can show that $z_+\notin \partial_aQ$, so $z_+ \in \partial_bQ$. If $z_+\in \partial_{b_2}Q$ then we can join $\partial_{b_1} Q$ to $\partial_{b_2} Q$ in $Q$ by the path consisting of the line segments $[z_-,w_-]$ and $[z_+,w_+]$ and the arc $\Ce (w_-,w_+)$. Hence, by the definition of $s_b$ we have
		$$|z_- - w_-|+\ell(\Ce(w_-,w_+))+|z_+-w_+|\geq s_b$$
		But since $\ell(\Ce(\woa,w_0))< 15R$ and $\ell(\Ce(w_0,\wob))< 15R$, we have
		$$s_b \leq |z_- - w_-|+\ell(\Ce(w_-,w_+))+|z_+-w_+|< R+30R+R.$$
However, $R=\frac{s_a}{100L}$, so the above implies
		$$s_b<\frac{32 s_a}{100L}\leq \frac{32}{100} s_b,$$
		which is a contradiction. Hence, $z_+ \in \partial_{b_1}Q.$
		
		Suppose $[w_0,w_-] \nsubseteq Q$. In this case, let $z'_-$ be the boundary point  of $Q$ on $(w_0,w_-)$ that is closest to $w_-$. Similarly to $z_-$ this implies that $z'_- \in \partial_{b}Q$. But $z'_-$ cannot lie on $\partial_{b_1}Q$ because of our assumption on line segments with end points on the same $b$-side not intersecting $\Ce$. Thus, $z'_- \in \partial_{b_2}Q$, which is a contradiction because $(z_-,z'_-)\subset Q$ and $|z_- -z'_-|\leq R<s_b$. Following the same argument for $[w_0,w_+]$, we get that $[w_0,w_-], [w_0,w_+]\subset Q$.

		 Assume towards a contradiction that there are no points of $\Ce$ on $(z_-,w_-)\cup (z_+,w_+)$. Denote by $T$ the union of the closed line segments of $\Ce$ that intersect $w_-$ and $w_+$ and denote by $|T|$ the number of said line segments. Note that $|T|\in \{2,3,4\}$ based on whether $w_-, w_+$ are end points of some $[z_i,z_{i+1}]$ or not. Set $z_1', z_{N+1}'\in \Ce$ with $\ell(\Ce(z_1,z_1'))=\ell(\Ce(z_{N+1}',z_{N+1}))=R/2$ and pick some tiny positive $\tilde{\epsilon}<10^{-5}\epsilon$ so that $D(w,\tilde{\epsilon})\subset Q$  for all $ w\in \Ce(z_1',z_{N+1}')$. Let 
			\begin{align*}
                N_{\Ce} =\{ z\in D(w,\tilde{\epsilon}): w\in \Ce(z_1',z_{N+1}') \} \cup &\\
                \cup \{ z\in D(w,\tilde{\epsilon}) \cap Q: w\in \Ce(z_1,z_1')&\cup\Ce(z_{N+1}',z_{N+1}) \}
            \end{align*}
be a neighborhood of $\Ce\setminus \{z_1, z_{N+1}\}$ inside $Q$ so that 
		\begin{itemize}
			\item $(z_-,w_-)$ intersects the boundary of only one connected component of $N_{\Ce}\setminus\Ce$, for instance by taking $\tilde{\epsilon}<\frac{\dist((z_-,w_-),\, \Ce\setminus T)}{2}$,
			\item $(z_+,w_+)$ intersects the boundary of only one connected component of $N_{\Ce}\setminus\Ce$, for instance by taking $\tilde{\epsilon}<\frac{\dist((z_+,w_+),\, \Ce\setminus T)}{2}$.
		\end{itemize}

		\begin{figure}[H]
			\centering
			\includegraphics[width=0.85\textwidth]{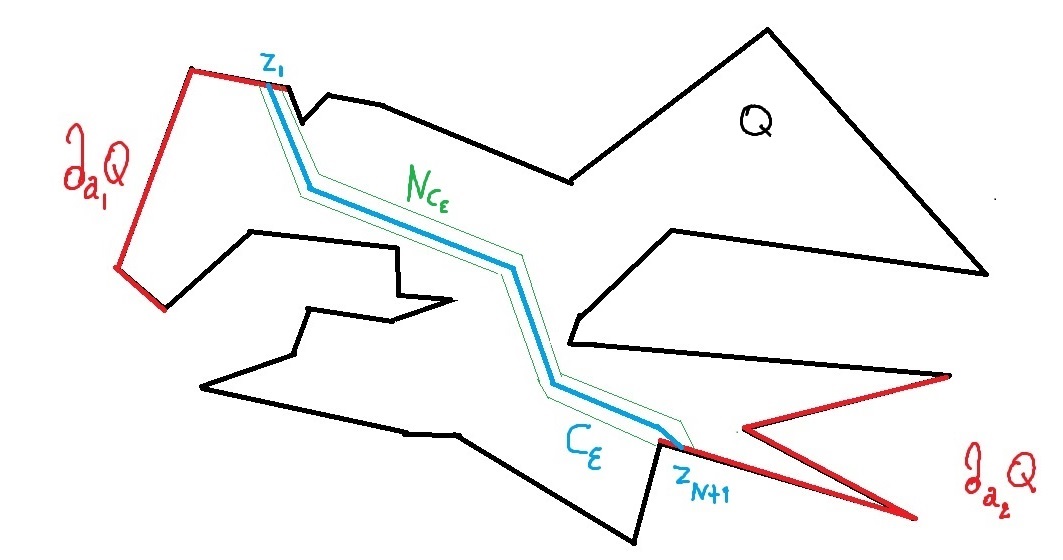}
			\caption{An example of $N_{\Ce}$.}
			\label{fig_def_NCe}
		\end{figure}
	
		As a result, $(z_-,w_-)$ and $(z_+,w_+)$ intersect $\partial N_{\Ce}$ at unique points $n_-$ and $n_+$, respectively. If both $(n_-,w_-)$, $(n_+,w_+)$ lie in the same connected component of $N_{\Ce}\setminus \Ce$, then we could find a arc $C_{\pm}$ that connects $n_-$ with $n_+$ inside the closure of the same component of $N_{\Ce}\setminus \Ce$. But for $\tilde{\epsilon}$ small enough, this arc can be chosen so that it lies inside $D(w_0,R)$ and does not intersect $\Ce$, implying that the arc $(z_-,n_-)\cup C_{\pm} \cup (n_+,z_+)$ connects $z_-$ with $z_+$ without intersecting $\Ce$.
	
		\begin{figure}
			\centering
			\includegraphics[width=0.7\textwidth]{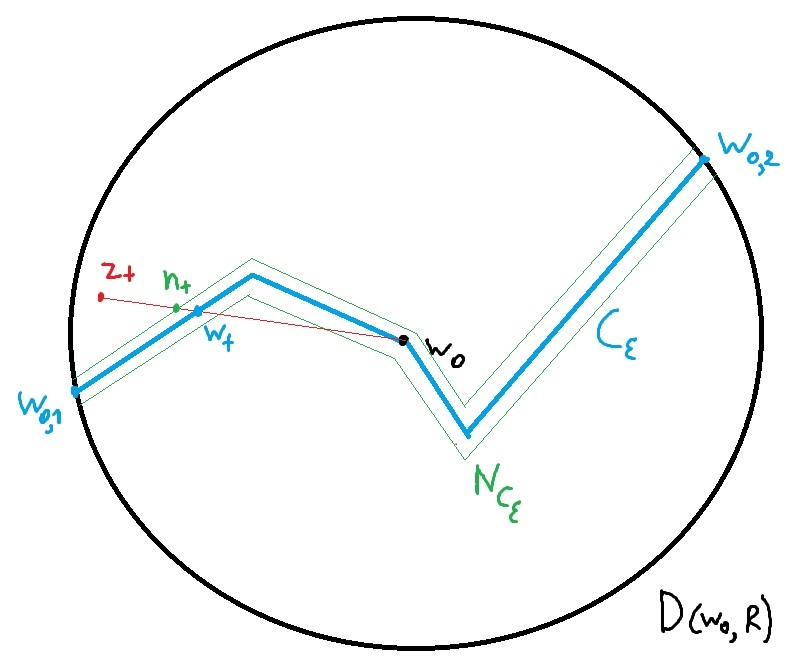}
			\caption{Showing how thin $N_{\Ce}$ is chosen to be, even inside $D(w_0, R)$ and compared to $|z_+-w_+|$, $|z_- - w_-|$.}
			\label{fig_thinNCe}
		\end{figure}

		But this contradicts the fact that $z_-\in D^-$ and $z_+\in D^+$, which are different connected components of $D(w_0,R)\setminus \Ce(\woa,\wob)$. Hence, $(n_-,w_-)$ and $(n_+,w_+)$ lie in different components of $N_{\Ce}\setminus \Ce$. If $Q_1$ and $Q_2$ are the two connected components of $Q\setminus \Ce$ that contain $\partial_{b_1}Q$ and $\partial_{b_2}Q$ on their boundary, respectively, then the two connected components of $N_{\Ce}\setminus \Ce$ would lie in $Q_1$ and $Q_2$, say the one including $(n_-, w_-)$ lies in $Q_1$ and the other in $Q_2$ and the proof is identical if it is the other way around. To see why this is not possible, unless one of $(z_-,w_-)$, $(z_+,w_+)$ intersects $\Ce$, it helps to map the quadrilateral $Q$ onto a rectangle $Rec(Q)$ using a conformal map $\phi$ so that $\phi (\partial_{a_1}Q)=(0,M)$, $\phi (\partial_{b_1}Q)=(M,M+i)$,  $\phi (\partial_{a_2}Q)=(i,M+i)$, $\phi (\partial_{b_2}Q)=(0,i)$, where $M=\text{Mod}(Q)$.
		 
		\begin{figure}[H]
			\centering
			\includegraphics[width=0.9\textwidth]{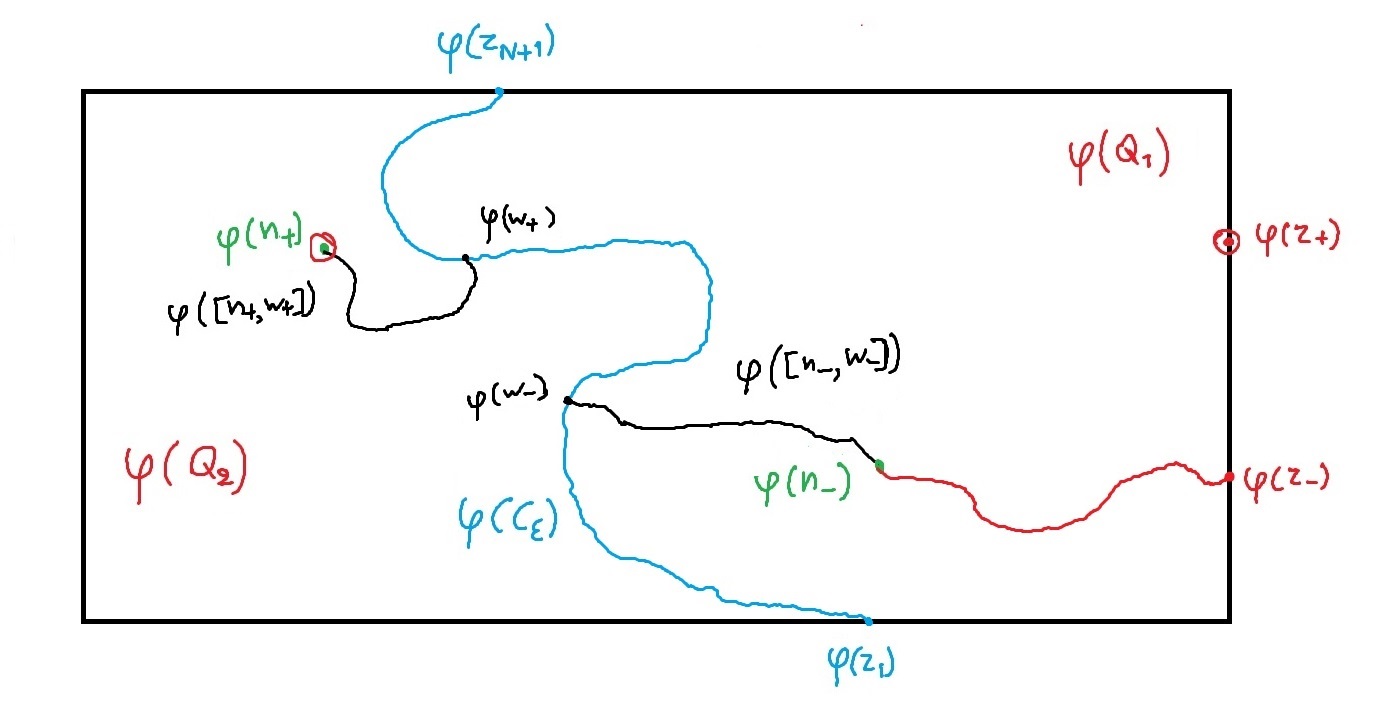}
			\caption{In case $[z_+, n+]\subset Q_2$, there is no way to connect $\phi(z_+)$ to $\phi(n_+)$ without crossing $\phi (\Ce)$ or the boundary of $Rec(Q)$. Similarly in the case $[z_+, n+]\subset Q_1$ for $\phi(z_-)$ and $\phi(n_-)$.}
			\label{fig_rec_sameb-side}
		\end{figure}
	
		What we have shown is that there is a point on the right vertical side of $Rec(Q)$, specifically $ \phi(z_+)$, which can be connected to a point of $\phi (Q_2)$, specifically $\phi(n_+)$, by a arc inside $Rec(Q)$ that does not intersect $\phi(\Ce)$. But since such a arc would start on the right vertical side and $\phi(n_+)\in \phi(Q_2)$, it intersects both $\phi(Q_1)$ and $\phi(Q_2)$, which can only be achieved either by crossing $\phi(\Ce)$ or the boundary of $Rec(Q)$. However, our hypothesis is that none of these two cases occurs, which leads to a contradiction.

        We reached the above contradiction because we assumed that there are no points of $\Ce$ on $(z_-,w_-)\cup (z_+,w_+)$. As a result, $\Ce$ intersects at least one of $(z_-,w_-)$, $(z_+,w_+)$. Suppose it intersects $(z_+,w_+)$. Note that by the definition of $w_+$, there are no points of $\Ce(\woa,\wob)$ on $(z_+,w_+)$. Thus, there is some $w_+' \in \Ce \setminus\Ce(\woa,\wob)$ that lies on $(z_+,w_+)$. But we showed that $(z_+,w_0) \subset Q$, which implies that $(w_+',w_0)\subset Q$. By minimality of $s_a$ we have
		 $$
		 s_a+\epsilon-\ell(\Ce(w_0,w_+'))+|w_+' -w_0|\geq s_a,
		 $$
		 so that
	\begin{equation}\label{pushpnt+}
        |w_+' -w_0|\geq \ell(\Ce(w_0,w_+'))-\epsilon.
	\end{equation}

        Similarly, if $\Ce$ intersects $(z_-, w_-)$ as well, we get
        \begin{equation}\label{pushpnt-}
            |w_-' -w_0|\geq \ell(\Ce(w_0,w_-'))-\epsilon.
        \end{equation}

        Recall that $w_-' \in \Ce(z_1,\woa)\cup \Ce(\wob,z_{N+1})$. If $w_-' \in \Ce(z_1,\woa)$ then
        $$
        \ell(\Ce(w_0,w_-')) = \ell(\Ce(w_0,\woa))+ \ell(\Ce(\woa,w_-'))\geq R+|\woa-w_-'|,
        $$ which combined with \eqref{pushpnt-} and $|w_-'-w_0|\leq R$ implies that
        $$
        \epsilon\geq |\woa-w_-'|.
        $$ But $|\tilde{z}_--w_-'|= |\tilde{z}_- -w_0|-|w_-'-w_0|\leq R-\ell(\Ce(w_0,w_-'))+\epsilon\leq \epsilon$. As a result,
        \begin{equation}\label{eq:2epsilon_needed}
        |\tilde{z}_- -\woa|\leq|\tilde{z}_--w_-'|+|\woa-w_-'| \leq 2\epsilon,
        \end{equation} which contradicts the choice of $\tilde{z}_-\in (D^-\cap \partial Q) \setminus (D(\woa,2\epsilon)\cup D(\wob,2\epsilon))$. Similarly, $w_-' \in \Ce(\wob, z_{N+1})$ implies that $|\tilde{z}_- -\wob|\leq 2\epsilon$, which is also a contradiction.

        Hence, $\Ce$ can only intersect $(z_+,w_+)$. Suppose $w_+'\in \Ce(z_1, \woa)$. Since 
        $$
        |\tilde{z}_+-\woa|\leq |\tilde{z}_+ -w_+'| + |\woa-w_+'|,
        $$ and because the right hand side equals $|\tilde{z}_+ - w_0|-|w_+' -w_0|+|\woa-w_+'|$, we get by \eqref{pushpnt+} and $|\tilde{z}_+-w_0|\leq R$ that
        \begin{equation}\label{eq:w_+'indisk}
        |\tilde{z}_+-\woa|\leq R-\ell(\Ce(w_0,w_+'))+\epsilon+|\woa-w_+'|.
        \end{equation} Because $w_-' \in \Ce(z_1,\woa)$ we get
        $$
        \ell(\Ce(w_0,w_+')) = \ell(\Ce(w_0,\woa))+ \ell(\Ce(\woa,w_+'))\geq R+|\woa-w_+'|,
        $$ which combined with \eqref{eq:w_+'indisk} implies that
        $$
        |\tilde{z}_+-\woa|\leq  \epsilon.
        $$ Similarly, if $w_+'\in \Ce(\wob, z_{N+1})$ we can show that $|\tilde{z}_+-\wob|\leq  \epsilon.$ Hence, $|\tilde{z}_+ -\woa|\leq \epsilon$ or $|\tilde{z}_+ -\wob|\leq \epsilon$, each of which implies that $|\tilde{z}_+-w_0|\geq R-\epsilon$.
        
        Note that the assumption $\tilde{z}_-\in (D^- \cap \partial Q)\setminus (D(\woa, 2\epsilon)\cup D(\wob,2\epsilon))$ was necessary because of \eqref{eq:2epsilon_needed}. Since $\tilde{z}_+\in D^+\cap \partial Q$ was arbitrary, the proof is complete. 
        
        
\end{proof}

\begin{Rem}\label{re:D+in_nbhd_of_ls}
    For the rest of the paper we assume without loss of generality that $D^+ \cap \partial Q \subset (D(\woa,2 \epsilon)\cup D(\wob,2 \epsilon))\setminus D(w_0,R-\epsilon)$. Observe that if for the boundary point $\tilde{z}_+\in D^+ \cap \partial Q$ the corresponding $w_+'$ lies in $\Ce(z_1,\woa)$, then by \eqref{pushpnt+} we have $\ell(\Ce(\woa,w_0))<R+2\epsilon$, and similarly if $w_+'\in \Ce(\wob,z_{N+1})$ then $\ell(\Ce(\wob,w_0))<R+2\epsilon$. This means that at least one of $\Ce(\woa,w_0)$, $\Ce(w_0,\wob)$ cannot deviate much from being the line segment $[\woa,w_0]$, $[w_0,\wob]$ respectively. In other words, at least one of them lies in a $3\epsilon$-neighborhood of the respective line segment. In the case $D^+\cap \partial Q \neq \emptyset$ assume for what follows that
    $$
    \Ce(\woa,w_0)\subset N_{3\epsilon}=\{z\in D(w, 3\epsilon): w\in [\woa,w_0] \}
    $$ without loss of generality, since in case $\Ce(w_0, \wob)\subset N_{3\epsilon}'=\{z\in D(w, 3\epsilon): w\in [w_0, \wob] \}$ the proof is identical.
    
\end{Rem}
The following Proposition finishes the proof of Theorem \ref{thm_sa} by placing a disk of radius $r$ inside the part of the ``good" component $D^+$ that contains no points of $\partial Q$.
\begin{Prop}\label{prop disk_place}
    Let $D^+$, $D^-$ be as in Proposition 2 and Remark \ref{re:D+in_nbhd_of_ls}. Then there is $w_0'\in D^+$ such that $$D(w_0',r)\subset D^+ \cap D(w_0,R-\epsilon)  \subset Q.$$
\end{Prop}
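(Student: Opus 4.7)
My plan is to place the candidate disk $D(w_0', r)$ at a point perpendicular to the near-radial segment of the cross-cut, on the side of that segment opposite to $\wob$, and verify that it satisfies all required containments.

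Place coordinates so that $w_0 = 0$ and $\woa = (R, 0)$. By Remark~1, $\Ce(\woa, w_0) \subset N_{3\epsilon}$, a tube of half-width $3\epsilon$ around the segment $[0, R]\times\{0\}$; since $3\epsilon \ll r = R/10$, this tube is much thinner than $r$. Write $\wob = R e^{i\theta}$ with $\theta \in (0, 2\pi)$, and without loss of generality assume $\theta \in (0, \pi]$, so $\Imag(\wob) \geq 0$. Set $w_0' := (0, -R/2)$. Then $D(w_0', r) \subset D(w_0, 3R/5) \subset D(w_0, R - \epsilon)$, and since $\dist(D(w_0', r), N_{3\epsilon}) \geq R/2 - r - 3\epsilon > 0$, the disk $D(w_0', r)$ is disjoint from $\Ce(\woa, w_0)$.

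The main step is to show that $D(w_0', r)$ is also disjoint from $\Ce(w_0, \wob)$. Suppose for contradiction there exists $w^* \in \Ce(w_0, \wob) \cap D(w_0', r)$; then $\Imag(w^*) \leq -2R/5 < 0 \leq \Imag(\wob)$, so the arc $\Ce(w_0, \wob)$ descends from $w_0$ into the lower half of the disk to reach $w^*$, then ascends back through the strip around the $x$-axis to reach $\wob$. I apply a shortcut argument in the spirit of Proposition~2. If $[w_0, w^*] \subset Q$, then near-minimality of $\Ce$ gives $|w^* - w_0| \geq \ell(\Ce(w_0, w^*)) - \epsilon$, which combined with $\ell(\Ce(w^*, \wob)) \geq |w^* - \wob|$ and the length bound $\ell(\Ce(w_0, \wob)) \leq 15R$ from Proposition~2 produces a configuration inconsistent with $\Ce$ being a simple arc of length at most $s_a + \epsilon$. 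Otherwise $[w_0, w^*]$ meets $\partial Q$, and by the scale separation $R = s_a/(100L) < s_b$ the closest intersection point must lie on $\partial_b Q$ (an intersection on $\partial_a Q$ would shortcut $\Ce$ as in the earlier propositions); a parallel construction applied to the ascending part $\Ce(w^*, \wob)$ yields a second $\partial_b Q$ crossing on the opposite $b$-side, producing a Jordan arc inside $Q$ joining $\partial_{b_1} Q$ to $\partial_{b_2} Q$ of length bounded by a constant multiple of $R$, contradicting $R < s_b$.

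Having shown $D(w_0', r) \cap \Ce(\woa, \wob) = \emptyset$, the disk lies in a single connected component of $D(w_0, R - \epsilon) \setminus \Ce(\woa, \wob)$. The same shortcut reasoning (applied now to any hypothetical $\partial Q$ point inside $D(w_0', r)$) shows $D(w_0', r) \subset Q$, and more generally that this entire component meets $\partial Q$ only in the small neighborhoods of $\woa, \wob$ outside $D(w_0, R - \epsilon)$; this matches the characterization of $D^+$ from Remark~1, so the component equals $D^+ \cap D(w_0, R-\epsilon)$ and $D(w_0', r) \subset D^+ \cap D(w_0, R-\epsilon) \subset Q$, as required. The case $\Imag(\wob) < 0$ is handled by the symmetric choice $w_0' := (0, R/2)$. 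The main obstacle is the careful execution of the shortcut/contradiction step in the third paragraph, which closely parallels the case analysis in Proposition~2 regarding segments $[w_0, w_\pm]$ and their intersections with $\partial_b Q$, and which relies crucially on the scale separation $R \ll s_b$.
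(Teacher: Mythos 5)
There is a genuine gap in the argument, and the location you chose for the candidate disk is the root of it. The paper places the disk tangent to the tube $N_{3\epsilon}$ at a point $n_0$ lying on the perpendicular to $[\woa,w_0]$ at distance $2r$ from $\woa$, i.e., \emph{near $\woa$}, not near $w_0$. That choice is what makes the near-minimality (shortcut) argument bite: if $\Ce(w_0,\wob)$ entered that disk at some $w_m$, then \emph{both} $w_\epsilon\in\Ce(\woa,w_0)$ and $w_m\in\Ce(w_0,\wob)$ sit near $\woa$, so $\ell(\Ce(w_\epsilon,w_0))+\ell(\Ce(w_0,w_m))\geq R$, while the replacement path $[w_\epsilon,n_0]\cup[n_0,w_m]$ has length at most $6\epsilon+2r\ll R$, giving a strict length saving. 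Your disk is centered at $w_0'=(0,-R/2)$, which is perpendicular to $[\woa,w_0]$ \emph{at $w_0$} and at distance roughly $R/2$ from $w_0$. A point $w^*\in\Ce(w_0,\wob)\cap D(w_0',r)$ then satisfies $\ell(\Ce(w_0,w^*))\approx R/2$ and $|w^*-w_0|\approx R/2$, so the near-minimality inequality $|w^*-w_0|\geq\ell(\Ce(w_0,w^*))-\epsilon$ is simply \emph{compatible} with such a $w^*$ and yields no contradiction; the phrase ``produces a configuration inconsistent with $\Ce$ being a simple arc of length at most $s_a+\epsilon$'' is not backed by any inequality, and in fact a near-geodesic $\Ce(w_0,\wob)$ can legitimately dip toward $(0,-R/2)$ if $\partial_b Q$ obstructs the direct route to $\wob$. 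Likewise, the claim that the ``parallel construction'' on $\Ce(w^*,\wob)$ produces a crossing of the \emph{opposite} $b$-side is unsupported: both crossings could lie on $\partial_{b_1}Q$, in which case there is no short arc from $\partial_{b_1}Q$ to $\partial_{b_2}Q$ and no contradiction with $R<s_b$.

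Two further issues. First, your argument invokes the tube $N_{3\epsilon}$ from Remark~1, which is only available when $D^+\cap\partial Q\neq\emptyset$; the complementary case $D^+\cap\partial Q=\emptyset$ is not addressed at all, and it requires a separate argument (the paper uses seven evenly spaced disks tangent to $\partial D(w_0,R)$ along $A^+$ and a length comparison to show one of them must miss $\Ce(\woa,\wob)$). Second, even granting disjointness from $\Ce$, you never verify that $(0,-R/2)$ lies in $D^+$ rather than $D^-$; the paper's choice of tangency ``on the $D^+$ side of $N_{3\epsilon}$'' is precisely what makes that automatic, whereas your choice leaves it open which component the disk sits in, and the concluding ``matches the characterization of $D^+$'' step presupposes exactly what needs to be proved.
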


\begin{proof}
    Suppose $D^+\cap \partial Q \neq \emptyset$. Denote by $n_0$ and $n_0'$ the points where $\partial N_{3\epsilon}$ intersects the line $E$ perpendicular to $[\woa,w_0]$ at $w_{2r}\in[\woa,w_0]$ with $|\woa-w_{2r}|=2r$. Set $y_{2r}$ to be the point of $\Ce(\woa,w_0)$ that lies on $[w_{2r},n_0]$ and is closest to $n_0$ and similarly set $y_{2r}'\in \Ce(\woa,w_0)$ to be the point on $[w_{2r},n_0']$ that is closest to $n_0'$. Moreover, set $y_w$ and $y_w'$ to be the two points on $E$ with $|y_w-w_{2r}|=|y_w'-w_{2r}|=r+3\epsilon$, where  $n_0\in [y_w, w_{2r}]$ and $n_0'\in [y_w', w_{2r}]$. Assume towards a contradiction that both $y_w, y_w' \in D^-$. Then at least one of the line segments $[y_{2r}, y_w]$, $[y_{2r}', y_w']$ intersects $\Ce(w_0,\wob)$. Assume without loss of generality that $[y_{2r}, y_w]\cap \Ce(w_0,\wob) \neq \emptyset$ and the proof is identical in the other case. Let $\tilde{y}_w$ be the point on $[y_{2r}, y_w]\cap \Ce(w_0,\wob)$ that is closest to $y_{2r}$. Then $[\tilde{y}_w, y_{2r}] \subset D^+$ and since it does not lie in $D(\woa, 2\epsilon)\cup D(\wob, 2\epsilon)$, by Proposition \ref{prop:bdryannul} and Remark \ref{re:D+in_nbhd_of_ls} we have that $[\tilde{y}_w, y_{2r}] \subset Q$. However, the Jordan arc $(\Ce \setminus \Ce(y_{2r}, \tilde{y}_w))\cup [\tilde{y}_w, y_{2r}]$ lies in $Q$ apart from its end points and connects its $a$-sides with length less or equal to
    $$
    s_a+\epsilon - \ell(\Ce(y_{2r}, \tilde{y}_w))+|\tilde{y}_w- y_{2r}|\leq s_a+\epsilon-8R/10-8R/10+r+3\epsilon,
    $$ in which the right hand side, by choice of $\epsilon$ and $r=R/10$, is strictly less than $s_a$ and leads to a contradiction. Thus, at least one of $y_w$, $y_w'$ lies in $D^+$, which we denote by $w_0'$. Assume without loss of generality that $y_w=w_0'$ and the proof is identical in the other case.
    
    Then the disk $D(w_0',r)=D(w_0',R/10)$ with $w_0' \in D^+$  is tangent to $\partial N_{3\epsilon}$ at $n_0$. We claim that $D(w_0',r)\subset D^+$. This would finish the proof, because by the choice of $w_0'$ it is easy to see that $D(w_0',r) \subset D(w_0, R-\epsilon)$. 
    \begin{figure}
    	\centering
    	\includegraphics[width=0.9\textwidth]{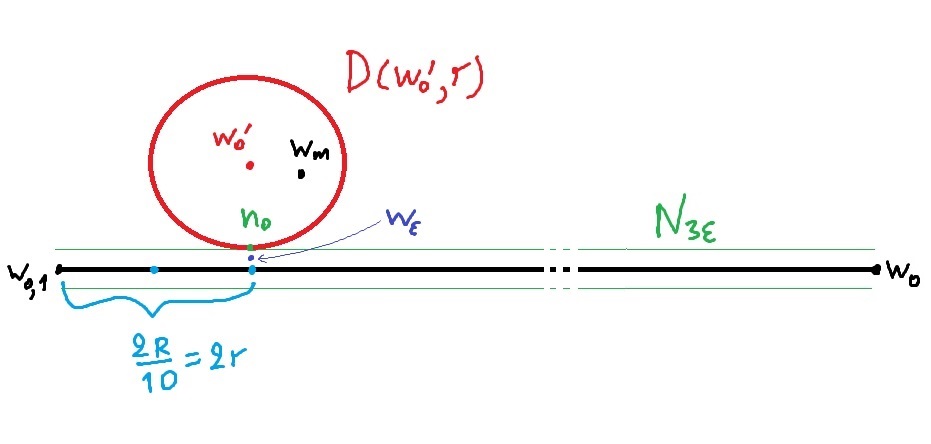}
    	\caption{The disk $D(w_0',r)$ tangent to $N_{3\epsilon}$.}
    	\label{fig_D+_hasbdry}
    \end{figure}To show this, it would be enough to show that no point of $\Ce(w_0, \wob)$ can lie in the interior of $D(w_0',r)$, something we know already for $\Ce(w_0, \woa)$ because it lies in $N_{3\epsilon}$. Assume towards a contradiction that this is not the case, and let $w_m$ be the point of $\Ce(w_0, \wob)$ inside $D(w_0',r)$ that is closest to $n_0$. The way to reach a contradiction is to show that this leads to an arc connecting the $a$-sides with length less than $s_a$. Let $w_\epsilon$ be the point of $\Ce(\woa,w_0)$ that lies on the line defined by $(w_0',n_0)$ and is closest possible to $n_0$. Then the arc
    $$
    \tilde{C}_\epsilon = \Ce(z_1, w_\epsilon) \cup [w_\epsilon, n_0]\cup [n_0,w_m]\cup \Ce(w_m, z_{N+1})
    $$ lies in $Q$, joins the $a$-sides of $Q$, and has length that must be at least $s_a$. But
    $$
    \ell(\tilde{C}_\epsilon)=s_a+\epsilon-\ell(\Ce(w_\epsilon,w_0))-\ell(\Ce(w_0,w_m))+|w_\epsilon-n_0|+|n_0-w_m|
    $$ and $|w_\epsilon-n_0|\leq 6\epsilon$, $|n_0-w_m|\leq 2r$. Hence, $\ell(\tilde{C}_\epsilon)\geq s_a$ implies that
    $$
    \epsilon+6\epsilon+2r\geq \ell(\Ce(w_\epsilon,w_0))+\ell(\Ce(w_0,w_m)).
    $$ But by the definition of $D(w_0',r)$ and $w_\epsilon$, the points $w_m$ and $w_\epsilon$ cannot lie in $D(w_0, R/2)$. Hence, since $\ell(\Ce(w_\epsilon,w_0))\geq |w_\epsilon-w_0|$ and $\ell(\Ce(w_m,w_0))\geq |w_m-w_0|$, we get
    $$
    7\epsilon+2r\geq R,
    $$ and recalling $r=R/ 10$ the above yields
    $$
    \epsilon\geq 4R/35,
    $$ which is a contradiction. As a result, $D(w_0',r) \subset D^+ \cap D(w_0, R-\epsilon) \subset Q$.

    Suppose $D^+ \cap \partial Q = \emptyset$. Let $\theta$ be the angular measure of the arc $A^+=\partial D^+ \cap \partial D(w_0,R)$. Let $d_1, \dots , d_7 \in A^+$ be such that the angle of the sub-arc of $A^+$ connecting $d_j$ with $d_{j+1}$ has measure $\theta R/8$ for all $0\leq j \leq 7$, where $d_0=\woa$ and $d_8= \wob$. For every $j$ with $1\leq j \leq 7$ denote by $D_j$ the disk $D(w_{d_j},r)\subset D(w_0,R)$ that is tangent to $\partial D(w_0,R)$ at the point $d_j$. If there is $j$ for which $D_j\cap \Ce(\woa,\wob)= \emptyset$ then $D_j\subset D^+\subset Q$ and the proof is complete.
        
    Assume towards a contradiction that all $D_j$ intersect  $\Ce(\woa,\wob)$ and denote by $c_j$ a point of $\Ce(\woa,\wob)$ in $D_j$ with minimal distance $|d_j-c_j|$. Then $[c_j,d_j]\subset D^+ \subset Q$ and $|c_j-d_j|\leq 2r=R/5$.
        
    \begin{figure}[H]
    	\centering
    	\includegraphics[width=0.8\textwidth]{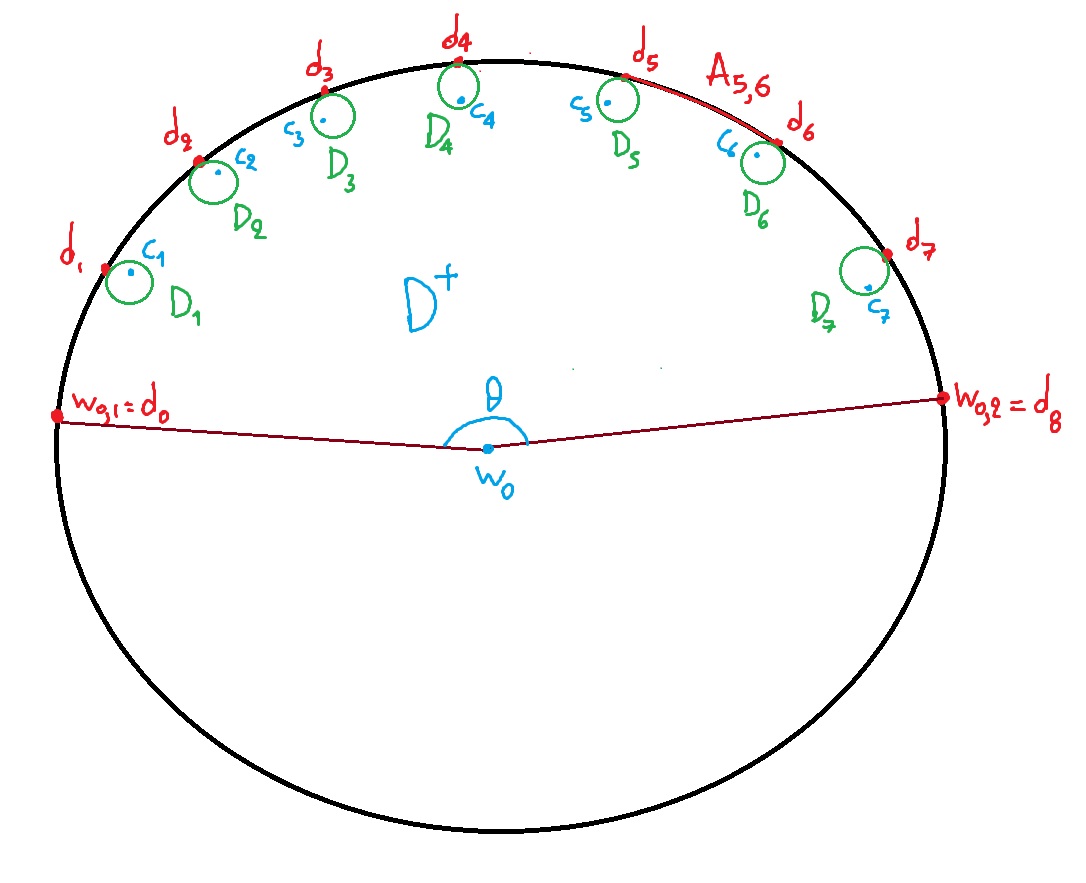}
    	\caption{The disks $D_j$ tangent to $\partial D(w_0,R)$ from the inside.}
    	\label{fig_D+_no_bdry}
    \end{figure}
        
    We will first show that all $c_j$ need to lie in the same component of $\Ce(\woa,\wob)\setminus \{w_0\}$. Indeed, suppose that there is $j\in [1,6]$ such that $c_j\in \Ce(\woa,w_0)$ and $c_{j+1} \in \Ce(w_0,\wob)$ (the proof is similar if the roles of $c_j$ and $c_{j+1}$ are reversed). If $A_{j,j+1}$ is the sub-arc of $A^+$ connecting $d_j$ and $d_{j+1}$, then the arc
    $$
     \Ce'=\Ce(z_1,c_j)\cup (c_j,d_j)\cup A_{j,j+1}\cup (d_{j+1},c_{j+1})\cup \Ce(c_{j+1},z_{N+1})
    $$ connects $\partial_{a_1} Q$ and $\partial_{a_2}Q$ inside $Q$. Hence, $\ell(\Ce')\geq s_a$, which implies        
    $$
        s_a+\epsilon-\ell(\Ce(c_j,w_0))-\ell(\Ce(w_0,c_{j+1}))+|c_j-d_j|+|c_{j+1}-d_{j+1}|+\ell(A_{j, j+1})\geq s_a.
    $$
    But then
    $$
        \epsilon+2r+2r+\theta R/8\geq \ell(\Ce(c_j,w_0))+\ell(\Ce(w_0,c_{j+1}))\geq 8R/10+8R/10,
    $$ which by $r=R/10$ and $\theta< 2\pi$ implies        
    $$
       \epsilon\geq 12R/10-\pi R/4 >2R/10,
    $$ which is a contradiction.
        
    As a result, all $c_j$'s lie in the same component of $\Ce(\woa,\wob)\setminus \{ w_0\}$, for all $j\in [1,7]$ . Assume that $c_j\in \Ce (\woa,w_0)$ for all $j\in [1,7]$, since the proof is identical in the case where all $c_j$ lie in $\Ce(w_0, \wob)$ instead. Then the arc
    $$
        \Ce''=\Ce(z_1,c_7)\cup(c_7,d_7)\cup A_{7,8}\cup \Ce(\wob,z_{N+1})
    $$ connects $\partial_{a_1} Q$ and $\partial_{a_2} Q$ inside $Q$. Similarly to $\Ce'$, this implies
    $$
        \epsilon +2R/10+\theta R/8\geq 8R/10+8R/10,
    $$ which gives the contradiction $\epsilon>14R/10-\pi R/4>4R/10$ and completes the proof.

\end{proof}

\section{Final remarks}
A natural question to ask is whether some kind of converse to Theorem~\ref{thm_main_Quad} could potentially hold. For instance, for a fixed sufficiently small $\epsilon>0$ and a fixed $\delta>0$, and for the collection $\mathcal{Q}_\delta$ of quadrilaterals $Q$ for which for every $w_0\in F'$ as in \eqref{furtherendpnts} there is a disk of radius $r=\delta \max\{s_a(Q),s_b(Q)\}$ within $D(w_0,10r)$ that lies in $Q$, is there a global bound on the modulus $M(Q)$ for all $Q\in \mathcal{Q}_\delta$ that depends only on $\delta$? Such a converse cannot be true, as demonstrated in Figures \ref{fig:countereg} and \ref{fig:Zoomcountereg}, even under the stronger assumption that there is a Jordan arc of length $s_a$ and every disk of radius $r=\delta \max\{s_a(Q),s_b(Q)\}$ centered on said arc lies entirely in $Q$.
\begin{figure}[H]
	\centering
	\includegraphics[height=0.2\textheight, width=0.9\textwidth]{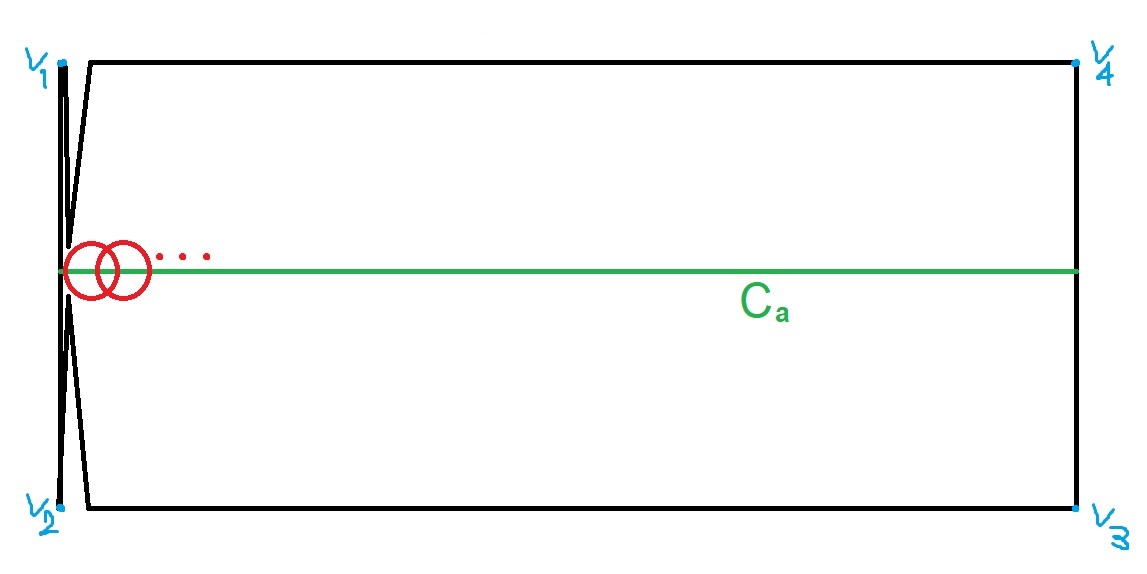}
	\caption{No matter how small the red disks are, the two pointy parts of the $b$-sides can be as close as needed to make the modulus too large.}
	\label{fig:countereg}
\end{figure}

\vspace{2cm}
\begin{figure}[H]
	\centering
	\includegraphics[width=0.4\textwidth, height=0.4\textheight]{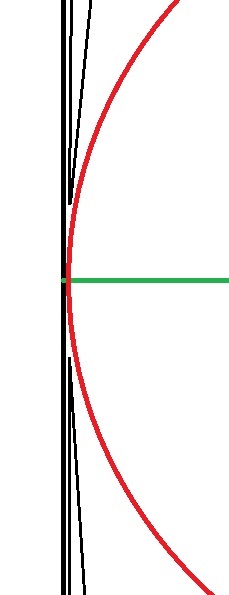}
	\caption{Zooming in around the left end point of $C_a$ from Figure \ref{fig:countereg}.}
	\label{fig:Zoomcountereg}
\end{figure}

Despite Theorem \ref{thm_main_Quad} not being a complete characterization of such collections of quadrilaterals with globally bounded modulus, it might still contribute to characterizations of planar quasiconformal maps. For instance, a result of similar geometric flavor was used in \cite{AHH} to prove that if a homeomorphism maps all equilateral triangles onto topological triangles whose vertices satisfy a condition related to bounded distortion, then it has to be quasiconformal. Other a priori weaker properties that ended up being enough to define quasiconformality have been given by Hinkkanen \cite{Ai}, Aseev \cite{As}, and Ackermann \cite{Ack}.

It is also important to point out that Propositions \ref{prop:step3}, \ref{prop:bdryannul}, and \ref{prop disk_place} provide interesting properties regarding the boundary points within components of certain disks in a quadrilateral, as well as an approximate location of the desired disk of Theorem~\ref{thm_main_Quad} lying inside the quadrilateral. Indeed, we prove that if $w_0$ is an arbitrary point of the set $F'$ defined by (\ref{furtherendpnts}) (that is, the set of points on the arc joining the $a-$sides of the quadrilateral not too close to the end points of the arc), then the disk $D(w_0,R)$ contains a disk of radius $r$ contained in the quadrilateral $Q$, where $R$ and $r$ are as just above (\ref{farendpnts}). 

A lot of the arguments in our proofs would be simplified if the arc $C_\eps$ defined in the second paragraph of Section 4 had length equal to $s_a$. Namely, if $C_a$ could be chosen to lie in the interior of $Q$ with end points not on the vertices of $Q$, in which case $C_\eps=C_a$. It was pointed out already after the definition of $C_\eps$ that the two arcs need not be the same, as it is depicted in Figure \ref{RefFig}. This raises the question (also proposed to us by the anonymous referee), whether there is a characterization for quadrilaterals $Q$ in $\mathcal{Q}_{\text{ls}}$ for which $C_a$ can be selected to lie in the interior of $Q$. To the best of our knowledge, this is an open problem with interest on its own, which would require a closer analysis of properties of ``linear" quadrilaterals.

\begin{figure}[H]
	\centering
	\includegraphics[width=0.9\textwidth]{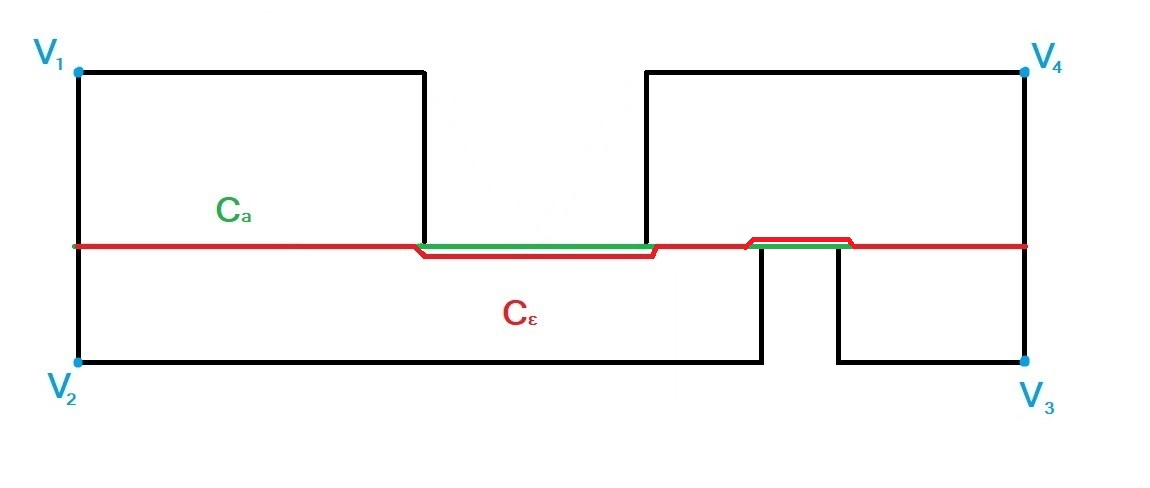}
	\caption{Example where $C_a$ (in green) and $C_\epsilon$ (in red) differ.}
	\label{RefFig}
\end{figure}

\bibliographystyle{acm}

\end{document}